\setlist{nolistsep}
\newcommand{\N}{\mathbb{N}}
\newcommand{\R}{\mathbb{R}}
\newcommand{\C}{\mathbb{C}}
\newcommand{\Mat}[2]{\textnormal{M}_{#1}\left(#2\right)}
\newcommand{\Diag}[2]{\textnormal{D}_{#1}\left(#2\right)}
\newcommand{\dif}{\textrm{d}}
\newcommand{\E}[1]{{\mathbb E}\left(#1\right)}
\newcommand{\Eop}[1]{{E}\left(#1\right)}
\newcommand{\EOP}{{E}}
\newcommand{\F}[1]{\varphi\left(#1\right)}
\newcommand{\FI}{\varphi}
\newcommand{\calA}{{\mathcal A}}
\newcommand{\calB}{{\mathcal B}}
\newcommand{\calC}{{\mathcal C}}
\newcommand{\calF}{{\mathcal F}}
\newcommand{\calG}{{\mathcal G}}
\newcommand{\TR}{\textrm{tr}}
\newcommand{\trd}[1]{\textrm{tr}_d\left(#1\right)}
\newcommand{\NC}{{\textnormal NC}}
\newtheoremstyle{DefinitionStyle}  
  {3pt}        
  {9pt}        
  {}           
  {0pt}        
  {\bfseries}  
  {.}          
  {3pt}        
  {}           
\theoremstyle{DefinitionStyle}
\newtheorem{definition}{Definition}
\newtheorem{lemma}{Lemma}
\newtheorem{proposition}{Proposition}
\newtheorem{theorem}{Theorem}
\newtheorem{corollary}{Corollary}
\newtheorem{example}{Example}
\author{Mario Diaz\thanks{Department of Mathematics and Statistics, Queen's University, Kingston, ON, Canada, \textit{13madt@queensu.ca}}}
\title{On Random Operator-Valued Matrices: Operator-Valued Semicircular Mixtures and Central Limit Theorem}
\date{\today}
\begin{document}

\maketitle


\begin{abstract}
Motivated by a random matrix theory model from wireless communications, we define random operator-valued matrices as the elements of $L^{\infty-}(\Omega,{\mathcal F},{\mathbb P}) \otimes {\textnormal M}_d({\mathcal A})$ where $(\Omega,{\mathcal F},{\mathbb P})$ is a classical probability space and $({\mathcal A},\varphi)$ is a non-commutative probability space. A central limit theorem for the mean $\textnormal{M}_d(\mathbb{C})$-valued moments of these random operator-valued matrices is derived. Also a numerical algorithm to compute the mean ${\textnormal M}_d({\mathbb C})$-valued Cauchy transform of operator-valued semicircular mixtures is analyzed.

{\em Keywords:} random operator-valued matrices, central limit theorem, operator-valued free probability, operator-valued semicircular mixture, multiantenna system model.
\end{abstract}

\section{Introduction}
\label{Section:Introduction}

In this section we motivate the study of what we call random operator-valued matrices. In particular, we generalize a random matrix model used in wireless communications. This will also provide us a natural link between random matrix ensembles and random operator-valued matrices. In the following section we summarize the notation and setting of this paper.

Recent developments in operator-valued free probability theory \cite{Speicher2007,Belinschi2013, Belinschi2014} have made possible to analyze a variety of wireless communication systems \cite{Far2008,Speicher2012,Diaz2014}. In the developing area of massive multiantenna systems \cite{Larsson2014} the dimension of the random matrix modelling the system is in the order hundreds or thousands. These large sizes suggest that the behavior of the spectrum of these matrices is very close to their asymptotic models, e.g. free deterministic equivalents \cite{Speicher2012} and operator-valued equivalents \cite{Diaz2014}.

Roughly speaking, random operator-valued matrices (models) is a special class of random variables with values over the matrices with coefficients in some non-commutative algebra. This contrasts with the classical models studied before, e.g. \cite{Speicher2012,Diaz2014}, where the asymptotic models are non-random operator-valued matrices over some non-commutative algebra. From an applied point of view, this extra randomness may reflect the statistical variations of the channel in a scale of time bigger than a period of use. Therefore this kind of model may be relevant to study properties of channels that depend on statistics that change over large periods of time.

Random operator-valued matrices are also a natural object from a mathematical point of view. It is known that if operator-valued matrices, i.e. elements in $\Mat{d}{\calA}$ where $(\calA,\FI)$ is a non-commutative probability space, have entries free over $\calA$ then they are free over $\Mat{d}{\C}$. Thus, if we have two fixed families $\{a_{i,j}\}_{i,j=1}^d$ and $\{b_{i,j}\}_{i,j=1}^d$ free over $\calA$ and we construct operator-valued matrices ${\bf A}=f(a_{i,j};i,j)$ and ${\bf B}=g(b_{i,j};i,j)$ for some suitable functions $f,g:\calA^{d^2}\to\Mat{d}{\calA}$, then ${\bf A}$ and ${\bf B}$ will be free over $\Mat{d}{\C}$. If instead of a pair of functions we have a pair of families of functions $\{f_\omega\}_{\omega\in\Omega}$ and $\{g_\omega\}_{\omega\in\Omega}$ for some probability space $(\Omega,\calF,\mathbb{P})$, then ${\bf A}={\bf A}(\omega)=f_\omega(a_{i,j};i,j)$ and ${\bf B}={\bf B}(\omega)=g_\omega(b_{i,j};i,j)$ are random variables with values over operator-valued matrices, thus the name random operator-valued matrices. Of course some measurability conditions should be satisfied, but in the context of this paper this requirement will be clearly satisfied. If the families $\{f_\omega\}_{\omega\in\Omega}$ and $\{g_\omega\}_{\omega\in\Omega}$ are suitable then the freeness is preserved almost everywhere, which makes possible to work in the realm of operator-value free probability theory almost everywhere. This is similar to classical probability where we work in the realm of real analysis almost everywhere. In this sense, here we are dealing with a probabilistic version of operator-value free probability theory. We will not work at this too general level of abstraction, in what follows we construct a particular class of objects for which the pointwise operator-valued behavior is more structured.

Returning to the wireless communication context, the random matrix of interest $H_N$ have the following form
\begin{align*}
H_N &= \left(
\begin{matrix}
A_{1,1} X_N^{(1,1)} & \cdots & A_{1,d} X_N^{(1,d)}\\
\vdots & \ddots & \vdots\\
A_{d,1} X_N^{(d,1)} & \cdots & A_{d,d} X_N^{(d,d)}
\end{matrix}
\right)
\end{align*}
where $A=(A_{i,j})_{i,j=1}^d$ is a selfadjoint $d\times d$ random matrix and $\left\{X_N^{(i,j)} \mid 1\leq i,j\leq d\right\}$ is a family of independent $N\times N$ standard complex Gaussian matrices such that $X_N^{(j,i)}=\left(X_N^{(i,j)}\right)^*$ for all $1\leq i,j\leq d$. The asymptotic analysis will be done with respect to $N$ while $d$ remain fixed (see \cite{Diaz2014} for further details on this kind of models). The matrix $A$ is assumed to be independent of $X_N^{(i,j)}$ for all $1\leq i,j\leq d$ and such that all its entries belong to $L^{\infty-}(\Omega,\calF,\mathbb{P})$. In fact, the extra randomness added to this model with respect to the one analyzed in \cite{Diaz2014} comes from this matrix $A$.

Let $F^{H_N}$ be empirical eigenvalue distribution of $H_N$. Some of the quantities of interest in the wireless communication context are given by
\begin{equation*}
I_N:=\E{\int_{-\infty}^\infty f(x) \dif F^{H_N}(x)}
\end{equation*}
for some non-negative, continuous and bounded function $f$. By a standard argument we have then
\begin{equation*}
I_N=\int_{-\infty}^\infty f(x) \dif F_N(x)
\end{equation*}
where $F_N$ is the mean eigenvalue distribution of $H_N$, i.e. $F_N(x) = \E{F^{H_N}(x)}$ for all $x\in\R$.

Let $(\Omega,\calF,\mathbb{P})$ be the underlying probability space. Suppose that the previous hypothesis on $H_N$ are satisfied\footnote{One way to construct such a family is: a) construct a probability space where the random matrix $A$ exists, b) construct another probability space where the family of $\{X_N^{(i,j)}\mid N\in\N,1\leq i,j\leq N\}$ exists and c) in the product of these spaces take the inclusions associated to aforementioned random variables.} for all $N\in\N$, then for almost every $\omega\in\Omega$ we have that \cite{Hiai2000}
\begin{equation}
\label{Eq:ConvergenceDistributionH}
H_N(\omega) \stackrel{\textnormal{dist}}{\longrightarrow}
\left(
\begin{matrix}
A_{1,1}(\omega) {\bf X}_{1,1} & \cdots & A_{1,d}(\omega) {\bf X}_{1,d}\\
\vdots & \ddots & \vdots\\
A_{d,1}(\omega) {\bf X}_{d,1} & \cdots & A_{d,d}(\omega) {\bf X}_{d,d}
\end{matrix}
\right) =: {\bf H}(\omega)
\end{equation}
where $\{{\bf X}_{i,j} \mid 1\leq i,j\leq d\}$ is a free circular family in a non-commutative probability space $(\calA,\varphi)$ with ${\bf X}_{i,j}^*={\bf X}_{j,i}$ for all $1\leq i,j\leq d$ and thus ${\bf X}=({\bf X}_{i,j})_{i,j=1}^d\in\Mat{d}{\calA}$ is an operator-valued matrix. Observe that ${\bf H}(\omega)= A(\omega) \circ {\bf X}$ where $\circ$ denotes the Hadamard or entrywise product. By convergence in distribution we mean that
\begin{equation*}
\lim_{N\to\infty} ({\rm I}\otimes\TR_{N})(H_N(\omega)^m) \stackrel{\textnormal{a.s.}}{=} \Eop{{\bf H}(\omega)^m}
\end{equation*}
for all $m\in\N$ where $\TR_{N}$ is the normalized trace in $\Mat{N}{\C}$ and $E:={\rm I}\otimes\FI$. Up to this point we described the behavior of ${\bf H}(\omega)$ for a fixed $\omega\in\Omega$. We can propose several abstract spaces in which the expression ${\bf H}:=A\circ{\bf X}$ have sense, in the next paragraph we construct such a space and describe the relations that the expectation w.r.t. $\mathbb{P}$, $\mathbb{E}$, and $\EOP$ should have.

To find the intended relations first consider a $N\times N$ block of the matrix $H_N^m$ for $m\in\N$, say the $i_1,i_m$-block for some $i_1,i_m\in\{1,\ldots,d\}$. Such a block is indeed a random matrix, so the natural linear functional to study is $\mathbb{E}\circ\TR_N$. The block under study is the sum of random matrices of the form
\begin{equation*}
A_{i_1,i_2}\cdots A_{i_{m-1},i_m}X_N^{(i_1,i_2)}\cdots X_N^{(i_{m-1},i_m)}
\end{equation*}
with $i_2,\ldots,i_{m-1}\in\{1,\ldots,d\}$. The independence between $A$ and the matrices $\{X_N^{(i,j)} \mid 1\leq i,j\leq d\}$ implies that
\begin{align*}
\lim_{N\to\infty} \E{\TR_N\left(A_{i_1,i_2}\cdots A_{i_{m-1},i_m}X_N^{(i_1,i_2)}\cdots X_N^{(i_{m-1},i_m)}\right)} &= \E{A_{i_1,i_2}\cdots A_{i_{m-1},i_m}} \F{{\bf X}_{i_1,i_2}\cdots {\bf X}_{i_{m-1},i_m}}\\
&= (\mathbb{E}\otimes\FI)(A_{i_1,i_2}\cdots A_{i_{m-1},i_m}\otimes {\bf X}_{i_1,i_2}\cdots {\bf X}_{i_{m-1},i_m}).
\end{align*}

This suggests that the entries of ${\bf H}$ may belong to $(L^{\infty-}(\Omega,\calF,\mathbb{P}),{\mathbb E})\otimes(\calA,\FI)$. Therefore, ${\bf H}$ can be thought as an element in the space $(L^{\infty-}(\Omega,{\mathcal F},{\mathbb P}),\mathbb{E}) \otimes (\Mat{d}{\calA},\EOP)$. From the algebraic construction of the previous tensor product, $\mathbb{E}\otimes 1_\calA$ and $1\otimes\FI$ commute when applied to elements in $L^{\infty-}(\Omega,\calF,\mathbb{P})\otimes\calA$, $(1\otimes\EOP)(A)=A$ for all random matrix $A$ and $(\mathbb{E}\otimes{\rm I})({\bf X})={\bf X}$ for all ${\bf X}$ (non-random) operator-valued matrix and thus $1\otimes\EOP$ and $\mathbb{E}\otimes{\rm I}$ commute. If $\calB$ is a sub $\sigma$-algebra of $\calF$, from the definition of conditional expectation we have that $(\mathbb{E}\otimes 1_\calA)(\E{A\mid \calB}\otimes x)=(\mathbb{E}\otimes 1_\calA)(A\otimes x)$ for $A\in L^{\infty-}(\Omega,\calF,\mathbb{P})$ and $x\in\calA$, and in particular taking conditional expectation with respect to $\calB$ (in the first coordinate) and applying $1\otimes\FI$ commute in $L^{\infty-}(\Omega,\calF,\mathbb{P})\otimes\calA$.

By the independence between $A$ and the matrices $\{X_N^{(i,j)} \mid 1\leq i,j\leq d\}$, the convergence in equation (\ref{Eq:ConvergenceDistributionH}) also holds in mean and thus
\begin{equation*}
\lim_{N\to\infty} \E{\TR_{dN}(H_N^m)} = \E{\trd{\Eop{{\bf H}^m}}}
\end{equation*}
for all $m\in\N$, or equivalently
\begin{equation*}
\lim_{N\to\infty} \int_\R t^m \dif F_N(t) = \int_\R t^m \dif F(t)
\end{equation*}
for all $m\in\N$, where $F$ is the mean analytical distribution of ${\bf H}$ (see Definition \ref{Def:MeanAnalyticaDistribution} and Proposition \ref{Prop:Fubini}). If $F$ is determined by its moments, the convergence of the moments implies that $F_N \Rightarrow F$ and in particular $I_N\to I:=\int_\R f(t) \dif F(t)$. Therefore we can take $I$ as an approximation for $I_N$ and focus on the mean analytical distribution $F$ of ${\bf H}$.

The variance of the entries of $X$ can be absorbed by $A$, so without loss of generality we assume that $\F{{\bf X}_{i,j}{\bf X}_{k,l}}=\delta_{i,l}\delta_{j,k}$. Likewise, if $A_{i,k}=|A_{i,k}|\exp({\rm i}\arg(A_{i,k}))$ then the equality in distribution as complex random matrices $\exp({\rm i}\arg(A_{i,k}))X_N^{(i,k)}\stackrel{d}{=}X_N^{(i,k)}$ and the independence between $A$ and the family $\{X_N^{(i,j)}\mid 1\leq i,j\leq d\}$ show that the argument of $A_{i,k}$ can be absorbed by ${\bf X}_{i,k}$, so without loss of generality we assume that the entries of $A$ are positive random variables.

It is important to remark that the random operator-valued matrix of the form $A\circ{\bf X}$ as described above can be thought as the mixture by $A$ of the operator-valued semicircular element (over $\Mat{d}{\C}$) ${\bf X}$. Due to the important role of the classical Gaussian mixture in several applications, this analogy provides a general motivation for the study of elements of the aforementioned form.

\section{Notation and Setting}

Throughout this paper $d$ will be a fixed positive integer. $\Mat{d}{\C}$ denotes the $d\times d$ matrices over the complex numbers and $\Diag{d}{\C}$ the subset of diagonal complex matrices. The notation $A_{i,k}$ or $(A)_{i,k}$ denotes the $i,k$-entry of the matrix $A$. We set $\Mat{d}{\C}^\pm=\{B\in\Mat{d}{\C} \mid 0<\pm\Im(B)\}$ where $\Im(B):=\dfrac{B-B^*}{2i}$, and in particular $\C^\pm=\Mat{1}{\calC}^\pm$. Also, $\circ$ denotes the pointwise or Hadamard product of matrices.

In what follows $(\Omega,\calF,\mathbb{P})$ will be a probability space with expectation $\mathbb{E}$. We will denote by $(\calA,\FI)$ a non-commutative probability space where $1_\calA$ is the unit in $\calA$. The algebra $(\Mat{d}{\calA},\EOP)$ where $E:={\rm I}\otimes\FI:\Mat{d}{\calA}\equiv\Mat{d}{\C}\otimes\calA\to\Mat{d}{\C}$ is called the operator-valued matrices. The algebra of random operator-valued matrices is then defined to be $(L^{\infty-}(\Omega,{\mathcal F},{\mathbb P}),\mathbb{E}) \otimes (\Mat{d}{\calA},\EOP)$. Abusing of the notation, we also use $\EOP$ to denote the map $1\otimes({\rm I}\otimes\FI):L^{\infty-}(\Omega,{\mathcal F},{\mathbb P}) \otimes \Mat{d}{\calA}\to\Mat{d}{\C}$. In the same spirit, we use ${\mathbb E}$ to denote both expectation in $(\Omega,\calF,{\mathbb P})$ and the map ${\mathbb E}\otimes {\rm I}:L^{\infty-}(\Omega,{\mathcal F},{\mathbb P}) \otimes \Mat{d}{\calA}\to\Mat{d}{\calA}$.  We will use lower case letters to denote scalars, upper case letters to denote both matrices and random matrices, and bold upper case letters to denote both operator-valued matrices and random operator-valued matrices.

\section{Definitions}
\label{Section:MainResults}

Recall the following definitions from \cite{Mingo2014}.

\begin{definition}
Let ${\bf X}^{(1)}$ and ${\bf X}^{(2)}$ be operator-valued elements in $\Mat{d}{\calA}$. We say that ${\bf X}^{(1)}$ and ${\bf X}^{(2)}$ are free over $\Mat{d}{\C}$ if
\begin{equation*}
\Eop{p_1({\bf X}^{(i_1)})p_2({\bf X}^{(i_2)})\cdots p_k({\bf X}^{(i_k)})}=0
\end{equation*}
whenever $k\in\N$, $i_j\neq i_{j+1}$ for all $1\leq j <k$ and $\{p_i \mid 1\leq i\leq k\}$ are non commutative polynomials over $\Mat{d}{\C}$ such that $\Eop{p_j\left({\bf X}^{(i_j)}\right)}={\bf 0}$ for all $1\leq j\leq k$.
\end{definition}

\begin{definition}
\label{Def:SemicircularElement}
We say that ${\bf X}\in\Mat{d}{\calA}$ is an operator-valued semicircular element over $\Mat{d}{\C}$ if
\begin{equation*}
\Eop{{\bf X}^m} = \sum_{\pi\in\NC_2(m)} \kappa_\pi({\bf X})
\end{equation*}
where $\NC_2(m)$ are the non-crossing pairings of the set $\{1,\ldots,m\}$ and $\kappa_\pi({\bf X})$ is defined recursively using the nested structure of $\pi$ and the function $\eta:\Mat{d}{\C}\to\Mat{d}{\C}$ given by $\eta(B)=\Eop{{\bf X}B{\bf X}}$ for $B\in\Mat{d}{\C}$.
\end{definition}

\begin{example}
In the notation of the previous definition, if $\pi=\{(1,6),(2,3),(4,5),(7,8)\}$ then
\begin{align*}
\kappa_\pi({\bf X}) &= \eta(\eta({\rm I})\eta({\rm I})) \eta({\rm I}).
\end{align*}
\end{example}

\begin{definition}
\label{Def:SemicircularMixture}
We define an operator-valued semicircular mixture, semicircular mixture for short, as a random operator-valued matrix ${\bf H}$ such that ${\bf H}=A\circ{\bf X}$ where $A$ is a $d\times d$ selfadjoint random matrix with non-negative entries in $L^{\infty-}(\Omega,\calF,\mathbb{P})$ and ${\bf X}$ is a selfadjoint operator-valued matrix such that $\{{\bf X}_{i,j} \mid 1\leq i,j\leq d\}$ is a free circular family up to symmetry conditions\footnote{In particular, the condition ${\bf X}_{i,i}={\bf X}_{i,i}^*$ implies that the diagonal elements are semicircular non-commutative random variables.}.
\end{definition}

Recall that any random operator-valued matrix ${\bf Z}$, i.e. an element in $L^{\infty-}(\Omega,\calF,\mathbb{P})\otimes\Mat{d}{\calA}$, can be written as $\sum_{i=1}^n a_i \otimes {\bf Z}_i$ for some $n\in\N$ and $a_1,\ldots,a_n\in L^{\infty-}(\Omega,\calF,\mathbb{P})$ and ${\bf Z}_1,\ldots,{\bf Z}_n\in\Mat{d}{\calA}$.

\begin{definition}
\label{Def:IndependenceFreeness}
Let ${\bf Z}^{(1)}$ and ${\bf Z}^{(2)}$ be two elements in $L^{\infty-}(\Omega,\calF,\mathbb{P})\otimes\Mat{d}{\calA}$. We say that ${\bf Z}^{(1)}$ and ${\bf Z}^{(2)}$ are independent w.r.t. $\mathbb{P}$ if there exist $n_1,n_2\in\N$, $a^{(1)}_1,\ldots,a^{(1)}_{n_1},a^{(2)}_1,\ldots,a^{(2)}_{n_2}\in L^{\infty-}(\Omega,\calF,\mathbb{P})$ and ${\bf Z}^{(1)}_1,\ldots,{\bf Z}^{(1)}_{n_1},{\bf Z}^{(2)}_1,\ldots,{\bf Z}^{(2)}_{n_2}\in\Mat{d}{\calA}$ such that ${\bf Z}^{(1)}=\sum_{i=1}^{n_1} a^{(1)}_i\otimes{\bf Z}^{(1)}_i$, ${\bf Z}^{(2)}=\sum_{i=1}^{n_2} a^{(2)}_i\otimes{\bf Z}^{(2)}_i$ and $\{a^{(1)}_1,\ldots,a^{(1)}_{n_1}\}$ and $\{a^{(2)}_1,\ldots,a^{(2)}_{n_2}\}$ are independent families of random variables. Similarly, we say that ${\bf Z}^{(1)}$ and ${\bf Z}^{(2)}$ are free over $\Mat{d}{\C}$ if the families $\{{\bf Z}^{(1)}_1,\ldots,{\bf Z}^{(1)}_{n_1}\}$ and $\{{\bf Z}^{(2)}_1,\ldots,{\bf Z}^{(2)}_{n_2}\}$ are free over $\Mat{d}{\C}$.
\end{definition}

The linearity of $\EOP$ implies that if ${\bf Z}^{(1)}$ and ${\bf Z}^{(2)}$ are free over $\Mat{d}{\C}$ (in the sense of the previous definition), for any non-commutative polynomial in two variables $p$ the expression $\Eop{p({\bf Z}^{(1)},{\bf Z}^{(2)})}$ equals to the same expression as if ${\bf Z}^{(1)}$ and ${\bf Z}^{(2)}$ were two (non-random) operator-valued elements free over $\Mat{d}{\C}$, e.g. $\Eop{{\bf Z}^{(1)}{\bf Z}^{(2)}{\bf Z}^{(1)}}=\Eop{{\bf Z}^{(1)}\Eop{{\bf Z}^{(2)}}{\bf Z}^{(1)}}$.

\begin{definition}
\label{Def:Centered}
Let ${\bf Z}$ be a random operator-valued matrix. We say that ${\bf Z}$ is centered if there exists $n\in\N$, $a_1,\ldots,a_n\in L^{\infty-}(\Omega,\calF,\mathbb{P})$ and ${\bf Z}_1,\ldots,{\bf Z}_n\in\Mat{d}{\calA}$ such that ${\bf Z}=\sum_{i=1}^n a_i\otimes{\bf Z}_i$ and $\Eop{{\bf Z}_i}=0$ for all $1\leq i\leq n$.
\end{definition}

Observe that centered is used to refer just a property of the operator-valued part. The reason for doing this is that indeed we will not need any centeredness assumption for the random part. This reflects the fact that the operator-valued part {\it dominates} the random part in terms of the impact to the overall behavior of a random operator-valued matrix.

Recall the following standard definitions.

\begin{definition}
The scalar-valued Cauchy transform $g_F:\C^+\to\C^-$ of a probability distribution $F$ is defined for $z\in\C^+$ by
\begin{equation*}
g_F(z) = \int_\R \frac{1}{z-t} \dif F(t).
\end{equation*}
\end{definition}

\begin{definition}
Let ${\bf X}\in\Mat{d}{\calA}$ be an operator-valued matrix, we define its $\Mat{d}{\C}$-valued Cauchy transform $G_{\bf X}(B)$ for $B\in\Mat{d}{\C}$ such that $B-{\bf X}$ is invertible by
\begin{equation*}
G_{\bf X}(B) = \Eop{(B-{\bf X})^{-1}}.
\end{equation*}
\end{definition}

Following the pointwise or almost sure philosophy discussed early, the next definition is the straightforward generalization of the previous ones to our random operator-valued context.

\begin{definition}
Let ${\bf H}\in L^{\infty-}(\Omega,{\mathcal F},{\mathbb P}) \otimes \Mat{d}{\calA}$ be a random operator-valued matrix, we define its mean $\Mat{d}{\C}$-valued Cauchy transform $\calG_{\bf H}$ by
\begin{equation*}
\calG_{\bf H}(B) = \E{G_{\bf H}(B)}
\end{equation*}
for $B\in\Mat{d}{\C}$ such that $B-{\bf H}$ is invertible almost surely.
\end{definition}

Observe that $G_{\bf H}(B)$ is random matrix as it depends on ${\bf H}$. It is also possible to write $\calG_{\bf H}(B)$ as $(\mathbb{E}\otimes\EOP)((B-{\bf H})^{-1})$. However, we prefer the form used in the previous definition since, as we will see throughout the paper, the behavior of $\EOP$ {\it dominates} the one of $\mathbb{E}$.

Given a selfadjoint ${\bf X}\in\Mat{d}{\calA}$, the function $h:\C^+\to\C$ defined by $h(z)=\trd{G_{\bf X}(z{\rm I})}$ satisfies that $h(\C^+)\subset\C^-$ and $\lim_{y\to\infty} iyh(iy)=1$. Therefore \cite{Mingo2014}, there exists a unique probability distribution $F^{\bf X}$ on $\R$ such that
\begin{equation*}
h(z) = \int_\R (z-t)^{-1} \dif F^{\bf X}(t).
\end{equation*}
This observation gives sense to the following definition and its natural generalization to the random operator-valued context.

\begin{definition}
\label{Def:AnalyticaDistribution}
Let ${\bf X}\in\Mat{d}{\calA}$ be selfadjoint. We define the analytical distribution of ${\bf X}$ as the unique probability distribution $F^{\bf X}$ on $\R$ such that
\begin{equation*}
\trd{G_{\bf X}(z{\rm I})} = \int_\R (z-t)^{-1} \dif F^{\bf X}(t)
\end{equation*}
for $z\in\C^+$.
\end{definition}

\begin{definition}
\label{Def:MeanAnalyticaDistribution}
Let ${\bf H}\in L^{\infty-}(\Omega,{\mathcal F},{\mathbb P}) \otimes \Mat{d}{\calA}$ be a selfadjoint random operator-valued matrix. We define the mean analytical distribution of ${\bf H}$ as the probability distribution $F$ on $\R$ such that $F(x):=\E{F^{\bf H}(x)}$ where $F^{{\bf H}(\omega)}$ is the analytical distribution of ${\bf H}(\omega)$.
\end{definition}

\section{Mean $\Mat{d}{\C}$-Valued Cauchy Transform of Semicircular \-Mixtures}

Let ${\bf H}=A\circ{\bf X}$ be a semicircular mixture and $F$ its mean analytical distribution. Observe that $F^{\bf H}$ is a random probability distribution on $\R$ as it depends on $A$. The definition of $F$ (see Definition \ref{Def:MeanAnalyticaDistribution}) requires averaging (w.r.t. $\mathbb{P}$) the analytical distributions $F^{\bf H}$. The next proposition shows that averaging these distributions and then taking the Cauchy transform is the same as averaging the corresponding Cauchy transforms. Additionally, this proposition proves that this is also true for the moments, i.e. the moments of the mean analytical distribution of ${\bf H}$ are equal to the mean scalar-valued moments of ${\bf H}$, and they exist. In particular, this shows that the mean analytical distribution is the right object to study the behavior of ${\bf H}$.

\begin{proposition}
\label{Prop:Fubini}
Let ${\bf H}=A\circ{\bf X}$ be a semicircular mixture and $F$ its mean analytical distribution. Then the scalar-valued Cauchy transform $g_F:\C^+\to\C$ of $F$ is given by
\begin{align*}
g_F(z) = \trd{\calG_{\bf H}(z{\rm I})}.
\end{align*}
For all $m\in\N$ the $m$-th moment of $F$ exists and
\begin{equation*}
\int_\R t^m \dif F(t) = \E{\trd{\Eop{{\bf H}^m}}}.
\end{equation*}
\end{proposition}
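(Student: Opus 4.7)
The plan is to prove both statements by Fubini's theorem, swapping the classical expectation $\mathbb{E}$ with integration against $F^{{\bf H}(\omega)}$. The bridge between the random and non-random worlds is the pointwise identity, valid for each fixed $\omega\in\Omega$ by Definition \ref{Def:AnalyticaDistribution},
\[ \trd{G_{{\bf H}(\omega)}(z{\rm I})} = \int_\R (z-t)^{-1}\,\dif F^{{\bf H}(\omega)}(t), \]
together with its moment-side analogue $\int t^m\,\dif F^{{\bf H}(\omega)}(t)=\trd{\Eop{{\bf H}(\omega)^m}}$. Both parts of the proposition then follow by justifying an exchange of $\mathbb{E}$ with the integral against $F^{{\bf H}(\omega)}$.

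For the Cauchy-transform identity, I fix $z\in\C^+$ and set $y=\Im(z)>0$. The integrand $(z-t)^{-1}$ is bounded by $1/y$ uniformly in $t\in\R$ and $\omega\in\Omega$, and each $F^{{\bf H}(\omega)}$ is a probability measure, so Fubini applies trivially and gives $g_F(z)=\E{\trd{G_{\bf H}(z{\rm I})}}$. Since $\trd{\cdot}$ is a continuous linear functional on the finite-dimensional space $\Mat{d}{\C}$, it commutes with $\mathbb{E}$, yielding the claimed identity $g_F(z)=\trd{\calG_{\bf H}(z{\rm I})}$.

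For the moment identity I first need integrability of $\trd{\Eop{{\bf H}(\omega)^m}}$ in $\omega$. Expanding ${\bf H}=A\circ{\bf X}$ entrywise shows that $\Eop{{\bf H}^m}$ is a finite sum, indexed by tuples $(i_1,\dots,i_{m+1})\in\{1,\dots,d\}^{m+1}$, of terms of the form $A_{i_1,i_2}\cdots A_{i_m,i_{m+1}}\,\varphi({\bf X}_{i_1,i_2}\cdots {\bf X}_{i_m,i_{m+1}})$, with the free-circular scalar moment pulled out. Each $A_{i,j}\in L^{\infty-}$, so by Hölder each such product lies in $L^{\infty-}\subset L^1$; after normalized-tracing, this gives $\trd{\Eop{{\bf H}^m}}\in L^1$. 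Coupling this with the pointwise moment identity, a second application of Fubini-Tonelli produces $\int_\R t^m\,\dif F(t)=\E{\trd{\Eop{{\bf H}^m}}}$, simultaneously establishing the finiteness of the $m$-th moment of $F$.

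The main obstacle is the pointwise moment identity $\int t^m\,\dif F^{{\bf H}(\omega)}(t)=\trd{\Eop{{\bf H}(\omega)^m}}$. The cleanest route uses the fact that a free circular family lives naturally in a C*-probability space, so ${\bf X}$ has finite operator norm; combined with the fact that $A(\omega)$ is a scalar matrix, this bounds $\|{\bf H}(\omega)\|$ and forces $F^{{\bf H}(\omega)}$ to be compactly supported. One then reads the $m$-th moment off the asymptotic expansion of $\trd{G_{{\bf H}(\omega)}(z{\rm I})}$ at $z=\infty$, where this Cauchy transform coincides with the power series $\sum_{m\ge 0}z^{-m-1}\trd{\Eop{{\bf H}(\omega)^m}}$. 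With that ingredient in place, the remainder of the argument is just linearity, continuity of $\trd{\cdot}$, and the $L^{\infty-}$ integrability afforded by the entries of $A$.
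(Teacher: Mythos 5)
Your argument for the Cauchy--transform identity is the same as the paper's: bound $|z-t|^{-1}$ by $\Im(z)^{-1}$, exchange $\mathbb{E}$ with the integral against $F^{\bf H}$, and commute $\TRD$ with $\mathbb{E}$. For the moment identity you take a genuinely different route to integrability. The paper exploits positivity: since the entries of $A$ are non-negative and the Wick coefficients $\F{{\bf X}_{i_1,i_2}\cdots{\bf X}_{i_n,i_1}}$ are non-negative, it derives the pointwise bound $\lambda_{\max}({\bf H})\leq M\,\lambda_{\max}({\bf X})$ with $M=\max_{i,j}A_{i,j}$, and then uses $M\in L^{\infty-}$ to control $\E{\lambda_{\max}({\bf H})^m}$. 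You instead expand $\trd{\Eop{{\bf H}^m}}$ entrywise and apply H\"older to the products $A_{i_1,i_2}\cdots A_{i_m,i_1}$; this avoids any appeal to the sign of the entries of $A$ or to Wick positivity, and is arguably cleaner.

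There is, however, one step that does not quite close as written. Your H\"older argument controls $\E{\left|\int_\R t^m\,\dif F^{\bf H}(t)\right|}$, but Fubini--Tonelli for the exchange (and the very existence of the $m$-th moment of $F$) requires $\E{\int_\R |t|^m\,\dif F^{\bf H}(t)}<\infty$. For even $m$ these coincide and Tonelli applies directly, but for odd $m$ the absolute moment is not the absolute value of the signed moment, so ``simultaneously establishing finiteness'' is circular. The fix is mechanical: by Cauchy--Schwarz, $\int_\R |t|^m\,\dif F^{{\bf H}(\omega)}(t)\leq\left(\int_\R t^{2m}\,\dif F^{{\bf H}(\omega)}(t)\right)^{1/2}=\trd{\Eop{{\bf H}(\omega)^{2m}}}^{1/2}$, and the even-order case together with Jensen's inequality gives $\E{\trd{\Eop{{\bf H}^{2m}}}^{1/2}}\leq\E{\trd{\Eop{{\bf H}^{2m}}}}^{1/2}<\infty$. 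With that patch your proof is complete; the remaining ingredient (compact support of $F^{{\bf H}(\omega)}$ for fixed $\omega$ and reading moments off the expansion of the Cauchy transform at infinity) matches what the paper cites from the literature.
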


The previous proposition imply that $g_F$ can be obtained from the mean $\Mat{d}{\C}$-valued Cauchy transform $\calG_{\bf H}$ of ${\bf H}$. By the Stieltjes inversion theorem, it is enough to compute $\calG_{\bf H}$ to obtain $F$. Therefore we will focus on $\calG_{\bf H}$ in what follows.

\begin{definition}
\label{Def:Eta}
Let ${\bf H}=A\circ{\bf X}$ be a semicircular mixture. We define the random mapping $\eta:\Diag{d}{\C}\to\Diag{d}{\C}$ for $D\in\Diag{d}{\C}$ by
\begin{equation*}
\eta(D)_{i,i} = \sum_{j=1}^d A_{i,j}^2 D_{j,j}
\end{equation*}
for all $1\leq i\leq d$.
\end{definition}

\begin{theorem}
\label{Thm:Elim}
Let ${\bf H}=A\circ{\bf X}$ be a semicircular mixture. Then
\begin{equation*}
\calG_{\bf H}(z{\rm I}) = \lim_{n\to\infty} \E{T_{z{\rm I}}^{\circ n}(W)}
\end{equation*}
for all $z\in\C^+$ and all $W\in\Mat{d}{\C}^-$ where $T_{z{\rm I}}(D)=\left(z{\rm I}-\eta(D)\right)^{-1}$ for $D\in\Diag{d}{\C}$.
\end{theorem}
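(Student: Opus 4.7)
The plan is to reduce the statement to a pointwise ($\omega$-by-$\omega$) fact about operator-valued semicircular elements and then exchange the limit with the expectation via dominated convergence.

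First I would verify that for every $\omega\in\Omega$ the element ${\bf H}(\omega)=A(\omega)\circ{\bf X}$ is an operator-valued semicircular element over $\Mat{d}{\C}$ in the sense of Definition~\ref{Def:SemicircularElement}. Since $\{{\bf X}_{i,j}\}$ is a free circular family (with symmetry ${\bf X}_{i,j}^*={\bf X}_{j,i}$), the joint $\FI$-moments are given by a non-crossing Wick formula with two-point function $\FI({\bf X}_{i,j}{\bf X}_{k,l})=\delta_{i,l}\delta_{j,k}$. A direct bookkeeping argument then shows that the $\EOP$-moments of ${\bf H}(\omega)$ match the semicircular moment formula with covariance $\eta_\omega(B):=\EOP({\bf H}(\omega)B{\bf H}(\omega))$. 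The computation
\[\eta_\omega(B)_{i,k}=\sum_{j,l}A_{i,j}(\omega)A_{l,k}(\omega)B_{j,l}\FI({\bf X}_{i,j}{\bf X}_{l,k})=\delta_{i,k}\sum_j A_{i,j}(\omega)^2 B_{j,j}\]
(using that $A$ has been reduced to real non-negative entries) shows that $\eta_\omega$ sends $\Mat{d}{\C}$ into $\Diag{d}{\C}$ and depends only on the diagonal of its argument, and that restricted to $\Diag{d}{\C}$ it coincides with the map $\eta$ of Definition~\ref{Def:Eta} evaluated at $\omega$. In particular, after one step the iterate $T_{z{\rm I}}(W)$ lies in $\Diag{d}{\C}$, so no information is lost by working in $\Diag{d}{\C}$.

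Second, I would invoke the standard fixed-point characterisation of the Cauchy transform of an operator-valued semicircular element: for each $\omega$, the matrix $G_{{\bf H}(\omega)}(z{\rm I})$ is the unique fixed point in $\Mat{d}{\C}^-$ of the random self-map $T_{z{\rm I}}^{(\omega)}$, and iteration from any $W\in\Mat{d}{\C}^-$ converges to it. The self-map property is elementary: if $\Im(D)<0$ then $\eta_\omega(D)$ is a diagonal matrix with non-positive imaginary part, so $\Im(z{\rm I}-\eta_\omega(D))\geq\Im(z){\rm I}$ is strictly positive, and inversion lands in $\Mat{d}{\C}^-$ with the uniform norm bound $\|T_{z{\rm I}}^{(\omega)}(D)\|\leq 1/\Im(z)$. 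The fixed-point equation $G_{{\bf H}(\omega)}(z{\rm I})=(z{\rm I}-\eta_\omega(G_{{\bf H}(\omega)}(z{\rm I})))^{-1}$ is immediate from the semicircular moment expansion in Definition~\ref{Def:SemicircularElement}. Global attraction of the iteration can be obtained either from the Earle--Hamilton theorem, applied on the $\omega$-independent bounded subdomain $\{D\in\Mat{d}{\C}^-:\|D\|\leq 1/\Im(z)\}$ into which $T_{z{\rm I}}^{(\omega)}$ maps after one step, or from a direct strict-contraction estimate in the Carath\'eodory metric on $\Mat{d}{\C}^-$.

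Finally, with the pointwise convergence $T_{z{\rm I}}^{\circ n}(W)(\omega)\to G_{{\bf H}(\omega)}(z{\rm I})$ in hand for every $\omega$, the uniform deterministic bound $\|T_{z{\rm I}}^{\circ n}(W)\|\leq 1/\Im(z)$ (valid for all $n\geq 1$) supplies an integrable majorant, and applying the dominated convergence theorem entrywise yields
\[\lim_{n\to\infty}\E{T_{z{\rm I}}^{\circ n}(W)}=\E{G_{\bf H}(z{\rm I})}=\calG_{\bf H}(z{\rm I}).\]
The main obstacle is the global attraction step of the previous paragraph: the fixed-point equation itself is routine, but establishing that iteration converges from an \emph{arbitrary} $W\in\Mat{d}{\C}^-$, rather than from a small neighbourhood of the fixed point, requires a genuine contraction argument, and it is here that most of the analytic work is concentrated.
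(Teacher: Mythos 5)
Your proposal is correct and follows essentially the same route as the paper: reduce to the pointwise ($\omega$-by-$\omega$) fixed-point iteration result for operator-valued semicircular elements (which the paper simply imports from Helton--Rashidi Far--Speicher as a black box, rather than reproving it via Earle--Hamilton as you sketch), compute $\eta$ to see it preserves $\Diag{d}{\C}$, establish the uniform deterministic bound $\|T_{z{\rm I}}^{\circ n}(W)\|\leq \Im(z)^{-1}$ on the iterates, and conclude by dominated convergence.
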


The previous theorem is a straightforward extension of the operator-valued version in \cite{Speicher2007}. In particular, this extension shows that limit and expectation commute. This commutativity implies that taking expectation or limit first does not matter when computing $\calG_{\bf H}$. The latter suggests that any reasonable numerical method will approximate $\calG_{\bf H}$ robustly. In particular, the following routine is an example of such a method.

By definition, for $D\in\Diag{d}{\C}$ we have that $T_{z{\rm I}}^{\circ n+1}(D)=T_{z{\rm I}}(T_{z{\rm I}}^{\circ n}(D))$, that is
\begin{equation}
\label{Eq:Recurrence}
T_{z{\rm I}}^{\circ n+1}(D)_{i,i} = \left(z-\sum_{j=1}^d A_{i,j}^2 T_{z{\rm I}}^{\circ n}(-{\rm i}{\rm I})_{j,j}\right)^{-1}
\end{equation}
for $i\in\{1,\ldots,d\}$. So we can approximate $\calG_{\bf H}(z{\rm I})$ using Monte-Carlo method as follows
\begin{itemize}
\item[-] Put $S={\bf 0}\in\Diag{d}{\C}$ and iterate $M$ times the following subroutine;
  \begin{itemize}
  \item[--] Pick random values for $A$;
  \item[--] Define $T_{z{\rm I}}^{\circ 0}=-{\rm i}{\rm I}$;
  \item[--] Compute $T_{z{\rm I}}^{\circ n+1}$ from $T_{z{\rm I}}^{\circ n}$ as in equation (\ref{Eq:Recurrence});
  \item[--] Stop at $n=N$ and add $T_{z{\rm I}}^{\circ N}$ to $S$;
  \end{itemize}
\item[-] Approximate $\calG_{\bf H}(z{\rm I})$ by $S/M$.
\end{itemize}
Observe that the above procedure have complexity $O(MNd^2)$. To speed up the algorithm some tailor made modifications were done. See the next section for details.

At a first sight the previous routine may seem quite unsatisfactory as it depends on Monte-Carlo method. However, suppose that we have a general formula for $\calG_{\bf H}$ that depends on the joint distribution of the entries of $A$. If such a formula comes in the form of an integral with respect to this joint distribution then $\calG_{\bf H}(z)$ would be an integral over a $d(d+1)/2$-dimensional space. Even for $d$ relatively small, such an integral is likely to be evaluated by a Monte-Carlo-like method. Our point here is that in general the previous routine is as far as we can go numerically speaking.

We also have the following corollaries.

\begin{corollary}
\label{Corollary:InvariantDistribution}
Let ${\bf H}=A\circ{\bf X}$ be a semicircular mixture. If for each $\omega\in\Omega$ there exists $K(\omega)$ such that $K(\omega)=\sum_{j=1}^d A_{i,j}^2$ for all $i\in\{1,\ldots,d\}$, then $\calG_{\bf H}(z{\rm I})$ is given by
\begin{equation*}
\calG_{\bf H}(z{\rm I}) = \E{\dfrac{z-\sqrt{z^2-4K}}{2K}}{\rm I}.
\end{equation*}
In particular, if $K$ does not depend on $\omega$ then $\calG_{\bf H}(z{\rm I})$ is given by $\calG_{\bf H}(z{\rm I}) = \dfrac{z-\sqrt{z^2-4K}}{2K}{\rm I}$.
\end{corollary}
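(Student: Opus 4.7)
The plan is to reduce the vector recursion of Theorem \ref{Thm:Elim} to a scalar one solving the standard semicircular Cauchy-transform equation, and then to commute expectation with the limit via dominated convergence. Under the hypothesis $\sum_{j=1}^d A_{i,j}^2 = K(\omega)$, for any scalar $c$ one has $\eta(c{\rm I})_{i,i} = c\sum_j A_{i,j}^2 = K(\omega)\, c$, so $\eta(c{\rm I}) = K(\omega)\, c\, {\rm I}$. Choosing $W = -{\rm i}\, {\rm I}\in \Mat{d}{\C}^-$, the iterates $T_{z{\rm I}}^{\circ n}(-{\rm i}\,{\rm I})$ remain in the one-dimensional subspace $\{c\, {\rm I}:c\in\C\}$; writing them as $c_n(\omega)\, {\rm I}$ gives the scalar recursion
\begin{equation*}
c_0 = -{\rm i}, \qquad c_{n+1} = \frac{1}{z - K(\omega)\, c_n}.
\end{equation*}
Since $\Im c \leq 0$ and $K \geq 0$ imply $z - Kc \in \C^+$, an induction shows $\Im c_n < 0$ and $|c_n(\omega)| \leq 1/\Im z$ for all $n \geq 1$.

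Next I would establish pointwise convergence $c_n(\omega)\to c_\infty(\omega)$, where $c_\infty$ is the unique root of $Kc^2 - zc + 1 = 0$ lying in $\C^-$; equivalently, $c_\infty = (z - \sqrt{z^2 - 4K})/(2K)$ for the branch of the square root for which $c_\infty \in \C^-$, with the convention $c_\infty = 1/z$ when $K(\omega) = 0$. The two roots $c_\infty$ and $\tilde c := (z + \sqrt{z^2 - 4K})/(2K)$ satisfy the Vieta identities $c_\infty + \tilde c = z/K \in \C^+$ and $c_\infty\, \tilde c = 1/K \in \R_{>0}$, which force $\arg \tilde c = -\arg c_\infty$ and $|c_\infty| < |\tilde c|$. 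The M\"obius self-map $f(w) = 1/(z - Kw)$ of $\C^-$ therefore has multiplier $f'(c_\infty) = K c_\infty^2 = c_\infty/\tilde c$ of modulus strictly less than one; it is hyperbolic with attracting fixed point $c_\infty$ and repelling fixed point $\tilde c \in \C^+$, so the orbit starting from $-{\rm i} \in \C^-$ converges to $c_\infty$.

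Finally, Theorem \ref{Thm:Elim} gives $\calG_{\bf H}(z{\rm I}) = \lim_n \E{c_n}\,{\rm I}$, and the uniform bound $|c_n| \leq 1/\Im z$ lets dominated convergence exchange limit and expectation, yielding $\calG_{\bf H}(z{\rm I}) = \E{c_\infty}\,{\rm I}$, the claimed formula; when $K$ does not depend on $\omega$ the expectation drops out and the second assertion follows.

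The main obstacle I anticipate is choosing the correct branch of the square root and ruling out the degenerate possibility that iterates orbit a horocycle rather than converge; both are resolved by the analysis above, which confines the orbit to $\C^-$, where $c_\infty$ is the unique fixed point of $f$, and verifies that the multiplier has modulus strictly less than one.
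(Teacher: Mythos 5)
Your proof is correct, and it reaches the formula by a genuinely different route from the paper. The paper fixes $\omega$, notes that $\eta$ acts on scalar matrices as multiplication by $K$, and substitutes into the fixed point equation (\ref{Eq:FixedPointEquation}) of Theorem \ref{Thm:Speicher}, $(z{\rm I}-\eta(G))G={\rm I}$, to obtain $KG^2-zG+{\rm I}=0$ directly for $G=G_{\bf H}(z{\rm I})$; the branch of the square root is then dictated by $G_{\bf H}(z{\rm I})\in\Mat{d}{\C}^-$, and $\calG_{\bf H}(z{\rm I})=\E{G_{\bf H}(z{\rm I})}$ finishes the argument. You instead start from the iteration of Theorem \ref{Thm:Elim}: you observe that the iterates stay in the scalar line $\{c{\rm I}\}$, reduce to the one-dimensional recursion $c_{n+1}=1/(z-Kc_n)$, prove convergence to the $\C^-$ root via the M\"obius fixed-point analysis, and then exchange limit and expectation by dominated convergence using the bound $|c_n|\le\Im(z)^{-1}$. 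Your version is longer but makes explicit two points the paper leaves tacit: that $G_{\bf H}(z{\rm I})$ is actually a scalar matrix (needed to pass from the matrix fixed point equation to the scalar quadratic), and which branch of $\sqrt{z^2-4K}$ is the right one, via the Vieta argument rather than by appeal to the known range of the Cauchy transform. Two minor quibbles, neither of which affects the argument: the multiplier $c_\infty/\tilde c$ is complex in general, so the map is loxodromic rather than hyperbolic in the usual terminology; and you should record that the two fixed points are distinct, which holds because $z^2=4K$ with $K\ge 0$ would force $z$ to be real, contradicting $z\in\C^+$.
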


The entries of $A$ were supposed to be positive for simplicity, but nothing stop us from using the results found so far to any matrix as long as the squares are replaced by the appropriate square norms.

\begin{corollary}
Let ${\bf H}=A\circ{\bf X}$ be a semicircular mixture and $F$ its mean analytical distribution. If $A$ is a unitary selfadjoint random matrix then $F=F^{\bf X}$.
\end{corollary}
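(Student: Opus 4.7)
The plan is to reduce the statement to the preceding Corollary \ref{Corollary:InvariantDistribution} and match Cauchy transforms. First, unitarity of $A$ (combined with the extension of the previous results to complex $A$, in which $A_{i,j}^2$ is replaced by $|A_{i,j}|^2$, as noted in the paragraph preceding the corollary) forces $\sum_{j=1}^d |A_{i,j}|^2$ to equal a deterministic constant $K$ independent of both $i$ and $\omega\in\Omega$. This puts us in the setting of Corollary \ref{Corollary:InvariantDistribution} with a deterministic $K$, yielding
\begin{equation*}
\calG_{\bf H}(z{\rm I}) = \frac{z-\sqrt{z^2-4K}}{2K}\,{\rm I},
\end{equation*}
a scalar multiple of the identity whose coefficient is the Cauchy transform of a semicircular law.

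Next, I would compute $G_{\bf X}(z{\rm I})$ directly. Since ${\bf X}$ is a selfadjoint operator-valued matrix whose entries form a free circular family with $\FI({\bf X}_{i,j}{\bf X}_{k,l})=\delta_{i,l}\delta_{j,k}$, standard arguments show that ${\bf X}$ is an operator-valued semicircular element over $\Mat{d}{\C}$ with covariance $\eta_{\bf X}(B)=\Eop{{\bf X}B{\bf X}}$ computable directly from the prescribed moments. Solving the fixed-point equation $G_{\bf X}(z{\rm I})=(z{\rm I}-\eta_{\bf X}(G_{\bf X}(z{\rm I})))^{-1}$ then yields a closed form of the same semicircular type, again a scalar multiple of the identity.

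Finally, applying $\trd$ to both expressions and invoking Proposition \ref{Prop:Fubini} gives $g_F(z)=\trd(\calG_{\bf H}(z{\rm I}))$ and $g_{F^{\bf X}}(z)=\trd(G_{\bf X}(z{\rm I}))$ on $\C^+$, so once the two scalar Cauchy transforms are shown to coincide the equality $F=F^{\bf X}$ follows by Stieltjes inversion. The only content-bearing step --- and the one I would verify most carefully --- is checking that the constant $K$ forced by the unitarity hypothesis coincides with the variance emerging from the fixed-point computation for $G_{\bf X}(z{\rm I})$, so that the two semicircular Cauchy transforms genuinely match; the rest is bookkeeping around Proposition \ref{Prop:Fubini} and Stieltjes inversion.
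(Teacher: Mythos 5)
Your overall strategy --- reduce to Corollary \ref{Corollary:InvariantDistribution}, observe that $\calG_{\bf H}(z{\rm I})$ is then a scalar multiple of ${\rm I}$ of semicircular type, compare with $G_{\bf X}(z{\rm I})$, and conclude by Proposition \ref{Prop:Fubini} and Stieltjes inversion --- is certainly the intended route (the paper gives no separate proof of this corollary). However, the one step you explicitly defer, ``checking that the constant $K$ forced by the unitarity hypothesis coincides with the variance emerging from the fixed-point computation for $G_{\bf X}(z{\rm I})$,'' is precisely where the argument fails under your reading of the hypothesis. You read ``unitary'' as $AA^*={\rm I}$, which gives $K=\sum_{j}|A_{i,j}|^2=1$ and hence $\calG_{\bf H}(z{\rm I})=\frac{z-\sqrt{z^2-4}}{2}{\rm I}$, the semicircle of variance $1$. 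But with the paper's normalization $\F{{\bf X}_{i,j}{\bf X}_{k,l}}=\delta_{i,l}\delta_{j,k}$ (used in Lemma \ref{Lemma:eta}), the fixed-point equation for ${\bf X}$ reads $g=(z-dg)^{-1}$, so $G_{\bf X}(z{\rm I})=\frac{z-\sqrt{z^2-4d}}{2d}{\rm I}$ and $F^{\bf X}$ is the semicircle of variance $d$. For $d\geq 2$ these do not agree; concretely, $A={\rm I}$ is unitary and selfadjoint, yet $A\circ{\bf X}=\mathrm{diag}({\bf X}_{1,1},\ldots,{\bf X}_{d,d})$ has analytical distribution the semicircle of variance $1$, not $F^{\bf X}$. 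So under the $AA^*={\rm I}$ interpretation the corollary itself is false, and no bookkeeping will close your proof.

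The statement is true, and your proof closes, only if ``unitary'' is read entrywise: $|A_{i,j}|=1$ almost surely for all $i,j$. This is the reading consistent with the surrounding text --- the preceding sentence extends the results to complex $A$ with $A_{i,j}^2$ replaced by $|A_{i,j}|^2$, and the following remark identifies the constant-$A$ case with the ``already known'' invariance of a free circular family under multiplication by phases (already used in the Introduction to absorb $\arg(A_{i,k})$ into ${\bf X}_{i,k}$). With that reading, $K=\sum_{j=1}^d|A_{i,j}|^2=d$ deterministically, Corollary \ref{Corollary:InvariantDistribution} gives $\calG_{\bf H}(z{\rm I})=\frac{z-\sqrt{z^2-4d}}{2d}{\rm I}=G_{\bf X}(z{\rm I})$, and the rest of your argument (take $\TRD$, apply Proposition \ref{Prop:Fubini} on one side and Definition \ref{Def:AnalyticaDistribution} on the other, invert) goes through verbatim. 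You should make the interpretation explicit and actually carry out the constant check rather than flag it, since it is the entire content of the corollary.
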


In the particular case when $A$ is constant the previous corollary is a remarkable, and already known, property of semicircular elements over $\Mat{d}{\C}$. In this sense, this corollary asserts that the random version of this known fact is also true. This makes reasonable that the structure of the moments of the CLT limit in $L^{\infty-}(\Omega,\calF,\mathbb{P})\otimes\Mat{d}{\calA}$ is connected to those in the operator-valued case.

\section{Central Limit Theorem}

Suppose that ${\bf Z}\in L^{\infty-}(\Omega,\calF,\mathbb{P})\otimes\Mat{d}{\calA}$ can be written as $\sum_{i=1}^n a_i\otimes {\bf Z}_i$ for some $n\in\N$, $a_1,\ldots,a_n\in L^{\infty-}(\Omega,\calF,\mathbb{P})$ and ${\bf Z}_1,\ldots,{\bf Z}_n\in\Mat{d}{\calA}$. We construct $\{{\bf Z}^{(j)}\}_j$ independent and free over $\Mat{d}{\C}$ copies of ${\bf Z}$ by a) creating $\{a^{(j)}_1,\ldots,a^{(j)}_n\}_j$ independent identically distributed copies of $\{a_1,\ldots,a_n\}$, b) creating $\{{\bf Z}^{(j)}_1,\ldots,{\bf Z}^{(j)}_n\}_j$ free over $\Mat{d}{\C}$ copies of $\{{\bf Z}_1,\ldots,{\bf Z}_n\}$ and c) defining ${\bf Z}^{(j)}=\sum_{i=1}^n a^{(j)}_1\otimes{\bf Z}^{(j)}_i$.

We now establish a central limit theorem for random operator-valued matrices. The proof of the following central limit theorem is analogous to the operator-valued case, though it shows that independence, freeness over $\Mat{d}{\C}$ and centeredness assumptions\footnote{In the sense of definitions \ref{Def:IndependenceFreeness} and \ref{Def:Centered}.} imply that the limiting mean $\Mat{d}{\C}$-moments have the same structure as the $\Mat{d}{\C}$-moments of semicircular element in $\Mat{d}{\calA}$. This is a quite remarkable feature as the non-trivial extension $L^{\infty-}(\Omega,\calF,\mathbb{P})\otimes\Mat{d}{\calA}$ of $\Mat{d}{\calA}$ have the same structure for the mean $\Mat{d}{\C}$-moments of the CLT limit as the CLT limit in $\Mat{d}{\calA}$ itself.

\begin{theorem}
\label{Thm:CentralLimitTheorem}
Let ${\bf Z}\in L^{\infty-}(\Omega,{\mathcal F},{\mathbb P}) \otimes \Mat{d}{\calA}$ be a selfadjoint centered random operator-valued matrix. Suppose that $\{{\bf Z}^{(n)}\}_{n\geq1}$ are independent and free over $\Mat{d}{\C}$ copies of ${\bf Z}$. Then the normalized sum ${\bf S}_N = \dfrac{{\bf Z}^{(1)}+\cdots+{\bf Z}^{(N)}}{\sqrt{N}}$ satisfies
\begin{equation*}
\lim_{N\to\infty} \E{\Eop{{\bf S}_N^m}} = \sum_{\pi\in\NC_2(m)} \hat{\kappa}_\pi({\bf Z})
\end{equation*}
for all $m\in\N$ where $\hat{\kappa}(\pi)$ is defined as in Definition \ref{Def:SemicircularElement} using $\hat{\eta}(B)=\E{\Eop{{\bf Z}B{\bf Z}}}$ ($B\in\Mat{d}{\C}$) instead of $\eta$.
\end{theorem}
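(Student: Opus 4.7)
The plan is to follow the standard method-of-moments proof of the operator-valued free CLT (cf.\ \cite{Speicher2007}), taking care that the classical expectation $\mathbb{E}$ can be pushed through each reduction step by exploiting the independence of distinct copies. Write ${\bf Z}=\sum_{k=1}^n a_k\otimes{\bf Z}_k$ with $\Eop{{\bf Z}_k}=0$ and ${\bf Z}^{(j)}=\sum_k a_k^{(j)}\otimes{\bf Z}_k^{(j)}$, and expand
\begin{equation*}
\E{\Eop{{\bf S}_N^m}}=N^{-m/2}\sum_{i_1,\ldots,i_m=1}^{N}\E{\Eop{{\bf Z}^{(i_1)}\cdots{\bf Z}^{(i_m)}}},
\end{equation*}
grouping terms by the set partition $\pi$ of $\{1,\ldots,m\}$ induced by $j\sim_\pi k\Leftrightarrow i_j=i_k$. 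The first check is that the summand depends only on $\pi$: classical independence of the tuples $\{a_k^{(j)}\}_k$ across distinct $j$ factorizes the $\mathbb{E}$-expectation along blocks of $\pi$, while $\Mat{d}{\C}$-freeness of the families $\{{\bf Z}_k^{(j)}\}_k$ across distinct $j$ makes the operator-valued moment depend only on the block structure. Denote this common value by $M(\pi)$; the number of multi-indices inducing $\pi$ is $N(N-1)\cdots(N-|\pi|+1)\sim N^{|\pi|}$.

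Next I would identify the surviving partitions. Those with $|\pi|<m/2$ contribute $O(N^{|\pi|-m/2})$ and vanish in the limit. If $|\pi|>m/2$, some block is a singleton; since $\Eop{{\bf Z}^{(j)}}=\sum_k a_k^{(j)}\Eop{{\bf Z}_k^{(j)}}=0$ by the centering assumption, and the singleton copy is free over $\Mat{d}{\C}$ from the rest, the operator-valued moment-cumulant identity forces $M(\pi)=0$. Hence only pair partitions survive, which forces $m$ even and gives asymptotic multiplicity $\to 1$.

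Among pair partitions, crossing ones also satisfy $M(\pi)=0$: iteratively collapse any innermost non-crossing pair via the freeness+centering reduction sketched below, until one is left with a core consisting of an alternating product of centered elements ${\bf Z}^{(a_1)},{\bf Z}^{(a_2)},\ldots$ from pairwise distinct subalgebras, whose $\Mat{d}{\C}$-valued expectation is $0$ by the very definition of $\Mat{d}{\C}$-freeness. Thus only $\pi\in\NC_2(m)$ contribute.

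Finally, for $\pi\in\NC_2(m)$ I would compute $M(\pi)=\hat{\kappa}_\pi({\bf Z})$ by induction on $m$. Pick an innermost pair $\{j,j+1\}$ of $\pi$ with shared index $i^*$, and write the product as $y_1({\bf Z}^{(i^*)})^2 y_2$, where $y_1,y_2$ involve only copies other than $i^*$. Freeness of ${\bf Z}^{(i^*)}$ over $\Mat{d}{\C}$ from the algebra generated by these other copies, together with $\Eop{{\bf Z}^{(i^*)}}=0$, yields the collapse
\begin{equation*}
\Eop{y_1({\bf Z}^{(i^*)})^2 y_2}=\Eop{y_1\,\Eop{({\bf Z}^{(i^*)})^2}\,y_2}
\end{equation*}
via the standard centering-around-a-free-subalgebra argument. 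The inner factor $\Eop{({\bf Z}^{(i^*)})^2}$ lies in $L^{\infty-}(\Omega,\calF,\mathbb{P})\otimes\Mat{d}{\C}$, and classical independence of $\{a_k^{(i^*)}\}_k$ from the random coefficients of $y_1,y_2$ lets the outer $\mathbb{E}$ factor, replacing this inner factor by its classical expectation $\hat{\eta}({\rm I})=\E{\Eop{{\bf Z}^2}}\in\Mat{d}{\C}$. The result is a lower-order moment of the same form, with the deterministic $\hat{\eta}({\rm I})$ sandwiched at the former location of the collapsed pair, matching exactly one step of the recursive definition of $\hat{\kappa}_\pi({\bf Z})$ from Definition~\ref{Def:SemicircularElement}; iterating through the nesting of $\pi$ produces the full $\hat{\kappa}_\pi({\bf Z})$. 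The main technical obstacle is this last bookkeeping: at each collapse one must verify that the sandwiched factor can be made deterministic by pulling $\mathbb{E}$ through, which hinges on the independence of the random part of the inner pair from everything in the outer sandwich.
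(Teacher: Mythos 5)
Your proposal is correct and follows essentially the same route as the paper: the same moment expansion and classification of partitions (singletons and crossing pairings killed by centeredness and $\Mat{d}{\C}$-freeness, blocks of size $\geq 3$ killed by counting), and your innermost-pair collapse — operator-valued freeness to replace the pair by $\Eop{({\bf Z}^{(i^*)})^2}$, then classical independence to replace that random matrix by its mean $\hat{\eta}({\rm I})$ — is precisely the content of the paper's pair cancellation lemma (Lemma \ref{Lemma:PairCancellation}), which the paper likewise iterates through the nesting of $\pi$ (using the tower property of conditional expectation where you invoke factorization by independence). No gaps.
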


Here is worth to point out the following. Consider $d=1$ and the non-commutative probability space $(L^{\infty-}(\Omega,\calF,\mathbb{P})\otimes \calA,\hat{\FI}:=\mathbb{E}\otimes\FI)$. Suppose $a_1,a_2$ are independent symmetric Bernoulli random variables in $L^{\infty-}(\Omega,\calF,\mathbb{P})$ and $b_1,b_2\in\calA$ are free (w.r.t. $\FI$) Poisson non-commutative random variables. By definition, $a_1\otimes b_1$ and $a_2\otimes b_2$ are independent and free over $\Mat{1}{\C}$. Observe that
\begin{align*}
\hat{\FI}((a_1\otimes b_1)(a_2\otimes b_2)(a_1\otimes b_1)(a_2\otimes b_2)) &= \E{a_1^2a_2^2}\F{b_1b_2b_1b_2}\\
&= \F{b_1^2}\F{b_2}^2+\F{b_1}^2\F{b_2^2}-\F{b_1}^2\F{b_2}^2 = 7,
\end{align*}
but $\hat{\FI}((a_1\otimes b_1)^2)\hat{\FI}(a_2\otimes b_2)^2+\hat{\FI}(a_1\otimes b_1)^2\hat{\FI}((a_2\otimes b_2)^2)-\hat{\FI}(a_1\otimes b_1)^2\hat{\FI}(a_2\otimes b_2)^2 = 0$ as $\E{a_1}=\E{a_2}=0$ and so $\hat{\FI}(a_1\otimes b_1)=\hat{\FI}(a_2\otimes b_2)=0$. In particular, $a_1\otimes b_1$ and $a_2\otimes b_2$ are not free w.r.t. $\hat{\FI}$. Therefore the previous theorem is a different result from the operator-valued central limit theorem, even when it gives us the same structure for the $\Mat{d}{\C}$-moments.

As a particular consequence of the central limit theorem derived we have the following.

\begin{corollary}
\label{Corollary:CLT}
Let $\{A^{(n)}\}_{n\geq1}$ be i.i.d. $d\times d$ selfadjoint random matrices with common distribution $A$ with entries in $L^{\infty-}(\Omega,\calF,\mathbb{P})$ and let $\{{\bf X}^{(n)}\}_{n\geq1}$ be selfadjoint operator-valued centered elements free over $\Mat{d}{\C}$ with common distribution ${\bf X}$. Then the normalized sum ${\bf S}_N = \dfrac{A^{(1)}\circ{\bf X}^{(1)}+\cdots+A^{(N)}\circ{\bf X}^{(N)}}{\sqrt{N}}$ satisfies
\begin{equation*}
\lim_{N\to\infty} \E{\Eop{{\bf S}_N^m}} = \E{\Eop{{\bf Y}^m}}
\end{equation*}
for all $m\in\N$ where ${\bf Y}\in\Mat{d}{\calA}$ is an operator-valued semicircular element with covariance given by
\begin{equation}
\label{Eq:Covariance}
\F{{\bf Y}_{i,k}{\bf Y}_{j,l}} = \E{A_{i,k}A_{j,l}}\F{{\bf X}_{i,k}{\bf X}_{j,l}}.
\end{equation}
\end{corollary}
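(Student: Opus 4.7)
The plan is to apply Theorem \ref{Thm:CentralLimitTheorem} with ${\bf Z} := A \circ {\bf X}$, so the bulk of the proof is a verification of the hypotheses and an explicit computation of the covariance map $\hat{\eta}$.

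First I would check that ${\bf Z} = A \circ {\bf X}$ is selfadjoint centered in the sense of Definitions \ref{Def:IndependenceFreeness} and \ref{Def:Centered}. Selfadjointness is a one-line entrywise check using that $A$ is a selfadjoint random matrix and ${\bf X}$ is a selfadjoint operator-valued matrix. For centeredness, I would write
\begin{equation*}
{\bf Z} = A\circ{\bf X} = \sum_{i,j=1}^d A_{i,j}\otimes\bigl({\bf X}_{i,j}\,E_{i,j}\bigr),
\end{equation*}
where $E_{i,j}\in\Mat{d}{\C}$ are the matrix units; then $\Eop{{\bf X}_{i,j}E_{i,j}}=\F{{\bf X}_{i,j}}E_{i,j}=0$ by the centeredness hypothesis on ${\bf X}$.

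Next, I would verify that the hypotheses on the copies are met. Using the same decomposition for ${\bf Z}^{(n)}=A^{(n)}\circ{\bf X}^{(n)}$, independence of the families $\{A^{(n)}_{i,j}\}_{i,j}$ across $n$ follows from the i.i.d. assumption on $A^{(n)}$, and freeness over $\Mat{d}{\C}$ of the families $\{{\bf X}^{(n)}_{i,j}E_{i,j}\}_{i,j}$ across $n$ follows from the freeness over $\Mat{d}{\C}$ of the ${\bf X}^{(n)}$ (since each ${\bf X}^{(n)}_{i,j}E_{i,j}$ lies in the $\Mat{d}{\C}$-algebra generated by ${\bf X}^{(n)}$).

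The key computation is then $\hat{\eta}(B)=\E{\Eop{{\bf Z}B{\bf Z}}}$ for $B\in\Mat{d}{\C}$. A direct entrywise expansion gives
\begin{equation*}
({\bf Z}B{\bf Z})_{i,l} = \sum_{j,k} B_{j,k}\,\bigl(A_{i,j}A_{k,l}\bigr)\otimes\bigl({\bf X}_{i,j}{\bf X}_{k,l}\bigr),
\end{equation*}
so that
\begin{equation*}
\hat{\eta}(B)_{i,l} = \sum_{j,k} B_{j,k}\,\E{A_{i,j}A_{k,l}}\,\F{{\bf X}_{i,j}{\bf X}_{k,l}}.
\end{equation*}
On the other hand, if ${\bf Y}\in\Mat{d}{\calA}$ is an operator-valued semicircular element with the covariance in equation (\ref{Eq:Covariance}), its associated map $\eta_{\bf Y}(B)=\Eop{{\bf Y}B{\bf Y}}$ satisfies $\eta_{\bf Y}(B)_{i,l}=\sum_{j,k}B_{j,k}\F{{\bf Y}_{i,j}{\bf Y}_{k,l}}$, which by (\ref{Eq:Covariance}) equals the right-hand side above. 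Hence $\hat{\eta}=\eta_{\bf Y}$, and consequently $\hat{\kappa}_\pi({\bf Z})=\kappa_\pi({\bf Y})$ for every $\pi\in\NC_2(m)$. Applying Theorem \ref{Thm:CentralLimitTheorem} and Definition \ref{Def:SemicircularElement},
\begin{equation*}
\lim_{N\to\infty}\E{\Eop{{\bf S}_N^m}} = \sum_{\pi\in\NC_2(m)}\hat{\kappa}_\pi({\bf Z}) = \sum_{\pi\in\NC_2(m)}\kappa_\pi({\bf Y}) = \Eop{{\bf Y}^m} = \E{\Eop{{\bf Y}^m}},
\end{equation*}
the last equality because ${\bf Y}$ is non-random.

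The only conceptually delicate point is the identification in step three — matching the Hadamard-product covariance to the semicircular covariance map — but the matrix-unit decomposition above reduces it to pure bookkeeping. All other steps are routine unpacking of the definitions.
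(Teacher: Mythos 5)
Your proposal is correct and follows the same route as the paper: the paper's proof simply asserts that ``the hypothesis of the CLT are clearly satisfied'' and records the same entrywise computation $\hat{\eta}(B)_{i,l}=\sum_{j,k}B_{j,k}\E{A_{i,j}A_{k,l}}\F{{\bf X}_{i,j}{\bf X}_{k,l}}$, which is exactly your key step. You merely spell out the parts the paper leaves implicit (the matrix-unit decomposition verifying centeredness, independence and freeness of the copies, and the identification $\hat{\eta}=\eta_{\bf Y}$ giving $\sum_{\pi\in\NC_2(m)}\hat{\kappa}_\pi({\bf Z})=\Eop{{\bf Y}^m}$), all of which check out.
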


Observe that in the previous corollary we do not assume any particular distribution for ${\bf X}$.

\section{Numerical Example}
\label{Section:NumericalExample}

To illustrate the technique discussed after Theorem \ref{Thm:Elim}, we compute the following example. We take the empirical eigenvalue distribution of $1000$ matrices of size $300\times 300$ with the following distribution: $d=3$ and $N=100$; the operator-valued part ${\bf X}$ was approximated by $(W+W^*)/20$ where $W$ is a $300\times 300$ complex Gaussian random matrix with i.i.d. entries with 0 mean and variance 2; the random part $A$ was taken as a $3\times 3$ matrix with i.i.d. standard Rayleigh distributed entries, up to symmetries. To approximate $\calG_{\bf H}(z{\rm I})$ we used the Monte-Carlo technique with 1000 iterations. To speed up the iterative method it did not stop at a fixed $N$ but when the difference between two consecutive iterations was smaller than $10^{-3}$ in $||\cdot||_1$. To make the iterative method even faster, given a realization of $A$, we computed $G_{\bf H}(z{\rm I})$ for all the desired values of $z$, say $z[1],\ldots,z[L]$, using as initial value to iterate $T_{z[n+1]{\rm I}}$ the last value $T_{z[n]{\rm I}}^{\circ N}$.

We took $\Im(z[n])=0.001$. In the author experience, this value is small enough to provide a good approximation, as showed in the following picture, and at the same time is far enough from the real axis to ensure convergence of the iterative method. Needless to say, the figure shows good agreement between the observed eigenvalues and the estimated density computed from $\calG_{\bf H}$.

\begin{figure}[ht]
 \centering
 \includegraphics[width=0.40\textwidth,height=0.25\textwidth,keepaspectratio=false]{./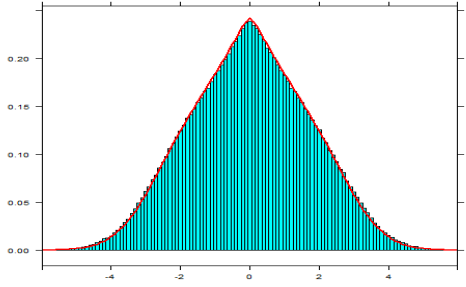}
 \caption{Histogram of the eigenvalues and plot of the estimated theoretical density.}
 \label{Fig:Hist}
\end{figure}

\section{Proofs of the Main Results}
\label{Section:Proofs}

\begin{proof}[{\bf Proof of Proposition \ref{Prop:Fubini}}]
It is well known that
\begin{equation*}
\int_\R f(t) \dif F(t) = \E{\int_\R f(t) \dif F^{\bf H}(t)}
\end{equation*}
for all positive or $F$-integrable function $f$. Let $z\in\C^+$ be fixed. By definition $g_F(z)=\displaystyle \int_{-\infty}^\infty \frac{1}{z-t} \dif F(t)$. Since $|z-t|^{-1}\leq\Im(z)^{-1}$, the function $(z-t)^{-1}$ is bounded and in particular integrable w.r.t. $F$. Thus
\begin{equation*}
g_F(z) = \E{\int_{-\infty}^\infty \dfrac{1}{z-t} \dif F^{\bf H}(t)}.
\end{equation*}
By definition of analytical distribution, the integral inside the expectation is equal to $\trd{G_{\bf H}(z{\rm I})}$ and therefore
\begin{align*}
g_F(z) &= \E{\trd{G_{\bf H}(z{\rm I})}}\\
&= \trd{\E{G_{\bf H}(z{\rm I})}}\\
&= \trd{\calG_{\bf H}(z{\rm I})},
\end{align*}
where the middle equality follows from the fact that trace and expected value commute.

For $\omega\in\Omega$ fixed the values of $\{A_{i,j} \mid 1\leq i,j\leq d\}$ are also fixed and finite a.s., since these are the variances of the entries of ${\bf H}(\omega)$ we have then that the support of $F^{{\bf H}(\omega)}$ is compact and also $\trd{\Eop{{\bf H}(\omega)^m}}=\int_\R t^m \dif F^{{\bf H}(\omega)}(t)$ for all $m\in\N$ \cite{Mingo2014}. Therefore, for $m\in\N$,
\begin{align*}
\E{\trd{\Eop{{\bf H}^m}}} &= \E{\int_\R t^m \dif F^{\bf H}(t)}\\
&= \int_\R t^m \dif F(t),
\end{align*}
where the last equation, as in the previous paragraph, is true as long as $t^m$ is integrable w.r.t. $F$. We will prove that $t^m$ is integrable w.r.t. $F$.

Let $m\in\N$ be fixed. Since $|t|^m$ is positive,
\begin{align*}
\int_\R |t|^m \dif F(t) = \E{\int_\R |t|^m \dif F^{\bf H}(t)} \leq \E{\lambda_{\max}({\bf H})^m}
\end{align*}
where $\lambda_{\max}({\bf H})(\omega)$ is the supremum of the absolute value over the support of $F^{{\bf H}(\omega)}$, i.e.
\begin{equation*}
\lambda_{\max}({\bf H})(\omega) = \sup_{\lambda\in\textnormal{Supp}(F^{{\bf H}(\omega)})} |\lambda|.
\end{equation*}
If we find a constant $C(\omega)$ such that $\int_\R t^n \dif F^{{\bf H}(\omega)}(t)\leq C(\omega)^n$ for all $n\in\N$, then we will have that $\lambda_{\max}({\bf H})(\omega)\leq C(\omega)$. Recall that $\int_\R t^n \dif F^{{\bf H}(\omega)}(t)=\trd{\Eop{{\bf H}(\omega)^n}}$ for $n\in\N$ and $\omega\in\Omega$ fixed. A straightforward computation shows that for $n\in\N$
\begin{align*}
\int_\R t^n \dif F^{{\bf H}(\omega)}(t) &= \trd{\Eop{{\bf H}(\omega)^n}} = \frac{1}{d} \sum_{i_1=1}^d \F{\left({\bf H}(\omega)^n\right)_{i_1,i_1}}\\
&= \frac{1}{d} \sum_{i_1,i_2,\ldots,i_n=1}^d \F{{\bf H}(\omega)_{i_1,i_2}\cdots{\bf H}(\omega)_{i_n,i_1}}\\
&= \frac{1}{d} \sum_{i_1,i_2,\ldots,i_n=1}^d (A_{i_1,i_2}\cdots A_{i_n,i_1})(\omega)\F{{\bf X}_{i_1,i_2}\cdots{\bf X}_{i_n,i_1}}.
\end{align*}
The Wick type formula for a free circular family shows that $\F{{\bf X}_{i_1,i_2}\cdots{\bf X}_{i_n,i_1}}\geq0$ for all $i_1,\ldots,i_n\in\{1,\ldots,d\}$. Let $M(\omega) = \max_{i,j=1,\ldots,d} A_{i,j}(\omega)$. By assumption $A_{i,j}$ is positive for all $1\leq i,j\leq d$, so the positivity of the coefficients $\F{{\bf X}_{i_1,i_2}\cdots{\bf X}_{i_n,i_1}}$ implies
\begin{align*}
\int_\R t^n \dif F^{{\bf H}(\omega)}(t) &\leq \frac{1}{d} \sum_{i_1,i_2,\ldots,i_n=1}^d M(\omega)^n \F{{\bf X}_{i_1,i_2}\cdots{\bf X}_{i_n,i_1}}\\
&= M(\omega)^n \trd{\Eop{{\bf X}^n}}.
\end{align*}
Since the analytical distribution of ${\bf X}$ is compact \cite{Mingo2014}, we have that $\lambda_{\max}({\bf X}) := \sup_{\lambda\in\textnormal{Supp}(F^{\bf X})} |\lambda|$ is finite and $\trd{\Eop{{\bf X}^n}}\leq \lambda_{\max}({\bf X})^n$. Thus
\begin{align*}
\int_\R t^n \dif F^{{\bf H}(\omega)}(t) &\leq M(\omega)^n \lambda_{\max}({\bf X})^n,
\end{align*}
and in particular $\lambda_{\max}({\bf H})\leq \lambda_{\max}({\bf X})M$. Therefore
\begin{align*}
\int_\R |t|^m \dif F(t) &\leq \E{\lambda_{\max}({\bf H})^m}\\
&\leq \lambda_{\max}({\bf X})^m\E{M^m} <\infty,
\end{align*}
where the existence of the $m$-th moment of the maximum is guaranteed by the fact that $A_{i,j}\in L^{\infty-}(\Omega,\calF,\mathbb{P})$ for all $1\leq i,j\leq d$.
\end{proof}

Recall the following theorem from \cite{Speicher2007}. We rephrase it in our terminology, so it constitutes the basis for the pointwise analysis (w.r.t. $\omega\in\Omega$).

\begin{theorem}
\label{Thm:Speicher}
Let ${\bf H}=A\circ{\bf X}$ be a semicircular mixture. Fix $\omega\in\Omega$. Define the mapping $\eta:\Mat{d}{\C}\to\Mat{d}{\C}$ given by $\eta(B)=\Eop{{\bf H}(\omega)B{\bf H}(\omega)}$. Then, for $B\in\Mat{d}{\C}^+$, the $\Mat{d}{\C}$-valued Cauchy transform of ${\bf H}(\omega)$ is given by
\begin{equation}
G_{{\bf H}(\omega)}(B) = \lim_{n\to\infty} T_B^{\circ n}(W)
\end{equation}
for any $W\in\Mat{d}{\C}^-$ where $T_B(W):=(B-\eta(W))^{-1}$. Moreover, $G_{{\bf H}(\omega)}(B)\in\Mat{d}{\C}^-$ and satisfies
\begin{equation}
\label{Eq:FixedPointEquation}
(B-\eta(G_{{\bf H}(\omega)}(B)))G_{{\bf H}(\omega)}(B) = {\rm I}.
\end{equation}
\end{theorem}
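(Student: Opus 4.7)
The plan is to first recognize that for fixed $\omega$, the random operator-valued matrix ${\bf H}(\omega)=A(\omega)\circ{\bf X}$ is a (non-random) operator-valued semicircular element over $\Mat{d}{\C}$ with covariance $\eta$. This is because $\{{\bf X}_{i,j}\}$ is a free circular family with ${\bf X}_{j,i}={\bf X}_{i,j}^*$, and the rescalings by the (now fixed) positive real numbers $A_{i,j}(\omega)$ preserve freeness, self-adjointness and the vanishing of all $\Mat{d}{\C}$-valued free cumulants of order $\neq 2$. One then checks directly that the only non-vanishing $\Mat{d}{\C}$-valued free cumulant of ${\bf H}(\omega)$ is $\kappa_2({\bf H}(\omega)B{\bf H}(\omega))=\eta(B)$, so ${\bf H}(\omega)$ fits Definition \ref{Def:SemicircularElement}. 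This identification reduces the theorem to the general statement about operator-valued semicircular elements.

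Next I would derive the fixed-point equation. By the moment-cumulant formula specialised to semicircular elements, $\Eop{{\bf H}(\omega)^m}=\sum_{\pi\in\NC_2(m)}\kappa_\pi({\bf H}(\omega))$. Pulling out the pair that contains the first leg in the Neumann expansion
\begin{equation*}
G_{{\bf H}(\omega)}(B)=\sum_{m\geq 0} B^{-1}\Eop{({\bf H}(\omega)B^{-1})^m}
\end{equation*}
(convergent for $\|B^{-1}\|$ small, i.e.\ for $\|\Im(B)^{-1}\|$ small enough, and then extended by analytic continuation) gives the Dyson-type identity $G_{{\bf H}(\omega)}(B)=B^{-1}+B^{-1}\eta(G_{{\bf H}(\omega)}(B))G_{{\bf H}(\omega)}(B)$, which is exactly \eqref{Eq:FixedPointEquation}. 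Since $G_{{\bf H}(\omega)}(B)\in\Mat{d}{\C}^-$ (a standard property of the $\Mat{d}{\C}$-valued Cauchy transform of a self-adjoint element evaluated at $B\in\Mat{d}{\C}^+$), $G_{{\bf H}(\omega)}(B)$ is a fixed point of $T_B$ inside $\Mat{d}{\C}^-$.

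The remaining and crucial task is to prove that the iteration $T_B^{\circ n}(W)$ converges to this fixed point from every $W\in\Mat{d}{\C}^-$. The natural tool is a complete metric on $\Mat{d}{\C}^-$ adapted to this problem: either the Carathéodory (Kobayashi) metric on the Siegel domain $\Mat{d}{\C}^-$ or the hyperbolic-type metric used in Helton--Rashidi Far--Speicher. Because $\eta$ is completely positive (being of the form $B\mapsto\Eop{{\bf H}(\omega)B{\bf H}(\omega)}$) and $B\in\Mat{d}{\C}^+$, the map $W\mapsto B-\eta(W)$ sends $\Mat{d}{\C}^-$ into $\Mat{d}{\C}^+$ strictly away from the real boundary, and composing with the resolvent $C\mapsto C^{-1}$ (which is a Schwarz-Pick-type contraction from $\Mat{d}{\C}^+$ into $\Mat{d}{\C}^-$) yields a strict contraction of $\Mat{d}{\C}^-$ into itself in the chosen metric. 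Banach's fixed-point theorem then simultaneously gives uniqueness of the fixed point in $\Mat{d}{\C}^-$ and convergence of $T_B^{\circ n}(W)$ to it from any starting point $W\in\Mat{d}{\C}^-$.

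The hard part is precisely this contraction step: one has to pick the right metric and extract the right quantitative estimate (the contraction factor depends on $\Im(B)$ and on $\|\eta\|$), because the elementary operator-norm estimate on $T_B$ is not contractive in general. Once the contraction argument is in place, the remaining assertions ($G_{{\bf H}(\omega)}(B)\in\Mat{d}{\C}^-$ and satisfies \eqref{Eq:FixedPointEquation}) follow by passing to the limit in $T_B^{\circ n+1}(W)=T_B(T_B^{\circ n}(W))$, using continuity of $T_B$ on $\Mat{d}{\C}^-$.
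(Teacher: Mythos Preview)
The paper does not prove this theorem at all: it is explicitly introduced with ``Recall the following theorem from \cite{Speicher2007}'' and is used as a black box for the pointwise analysis. So there is no paper proof to compare against beyond the citation.

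Your outline is correct and is essentially the Helton--Rashidi Far--Speicher argument from \cite{Speicher2007}: identify ${\bf H}(\omega)$ as an operator-valued semicircular element with covariance $\eta$, derive the Dyson/fixed-point equation from the non-crossing pair-partition expansion, and then run the strict-contraction argument on $\Mat{d}{\C}^-$ using complete positivity of $\eta$ and the Schwarz--Pick property of inversion to get convergence of the iterates and uniqueness of the fixed point. The only piece specific to this paper's setting is the observation in your first paragraph---that for fixed $\omega$ the matrix $A(\omega)\circ{\bf X}$ is genuinely semicircular over $\Mat{d}{\C}$---and that is exactly why the authors can simply quote \cite{Speicher2007} rather than reprove it. In short: your plan is right, but it reconstructs the cited reference rather than anything the present paper does.
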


Before proving Theorem \ref{Thm:Elim} we need to prove the following lemmas. The next lemma shows that the definition of $\eta$ in Definition \ref{Def:Eta} actually coincides with the definition of $\eta$ in the previous theorem for semicircular mixtures.

\begin{lemma}
\label{Lemma:eta}
Let ${\bf H}=A\circ{\bf X}$ be a semicircular mixture and let $\eta:\Mat{d}{\C}\to\Mat{d}{\C}$ be the random mapping defined by $\eta(B)=\Eop{{\bf H}B{\bf H}}$ for $B\in\Mat{d}{\C}$. If $B\in\Mat{d}{\C}$ then $\eta(B)\in\Diag{d}{\C}$ and
\begin{equation*}
\eta(B)_{i,i} = \sum_{j=1}^d A_{i,j}^2 B_{j,j}.
\end{equation*}
\end{lemma}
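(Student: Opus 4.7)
The plan is to compute $\Eop{{\bf H}B{\bf H}}$ entrywise and exploit the precise covariance structure of a free circular family.

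First I would expand $({\bf H}B{\bf H})_{i,k}$ in the standard way: since ${\bf H}_{i,j}=A_{i,j}{\bf X}_{i,j}$, one has
\begin{equation*}
({\bf H}B{\bf H})_{i,k}=\sum_{j,l=1}^{d}{\bf H}_{i,j}B_{j,l}{\bf H}_{l,k}=\sum_{j,l=1}^{d}A_{i,j}A_{l,k}B_{j,l}\,{\bf X}_{i,j}{\bf X}_{l,k}.
\end{equation*}
Applying $E={\rm I}\otimes\FI$ (and noting that the $A_{i,j}$ and $B_{j,l}$ are scalar with respect to $\FI$) reduces the problem to evaluating $\FI({\bf X}_{i,j}{\bf X}_{l,k})$.

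Next I would invoke the normalization fixed in Section 1, $\FI({\bf X}_{i,j}{\bf X}_{k,l})=\delta_{i,l}\delta_{j,k}$, which for the pair $(i,j),(l,k)$ gives $\FI({\bf X}_{i,j}{\bf X}_{l,k})=\delta_{i,k}\delta_{j,l}$. Substituting into the double sum collapses it to the diagonal $j=l$, $i=k$:
\begin{equation*}
\Eop{{\bf H}B{\bf H}}_{i,k}=\delta_{i,k}\sum_{j=1}^{d}A_{i,j}A_{j,k}B_{j,j}.
\end{equation*}
This already shows $\eta(B)\in\Diag{d}{\C}$.

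Finally I would use the hypotheses on $A$ from Definition \ref{Def:SemicircularMixture}: $A$ is selfadjoint with non-negative real entries, so $A_{j,i}=A_{i,j}$ and hence $A_{i,j}A_{j,i}=A_{i,j}^{2}$. Setting $i=k$ in the previous display yields $\eta(B)_{i,i}=\sum_{j=1}^{d}A_{i,j}^{2}B_{j,j}$, which matches Definition \ref{Def:Eta}.

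There is no real obstacle here beyond careful index bookkeeping; the only nontrivial ingredient is remembering that the scalar covariance normalization $\FI({\bf X}_{i,j}{\bf X}_{k,l})=\delta_{i,l}\delta_{j,k}$ of the free circular family is precisely what collapses the sum to a diagonal expression, and that the selfadjointness plus positivity of $A$ turns $A_{i,j}A_{j,i}$ into $A_{i,j}^{2}$.
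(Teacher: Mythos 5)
Your proof is correct and follows essentially the same route as the paper: expand $({\bf H}B{\bf H})_{i,k}$ entrywise, apply $\varphi$, and use the covariance $\F{{\bf X}_{i,j}{\bf X}_{k,l}}=\delta_{i,l}\delta_{j,k}$ of the free circular family together with the selfadjointness of $A$ to collapse the sum to the diagonal. The only difference is that you make explicit the normalization and the step $A_{i,j}A_{j,i}=A_{i,j}^2$, which the paper leaves implicit.
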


\begin{proof}
Let $B\in\Mat{d}{\C}$ and $i,k\in\{1,\ldots,d\}$. By definition of $\EOP$ we have that $\eta(B)_{i,k} = \F{({\bf H}B{\bf H})_{i,k}}$ and a straightforward computation shows that
\begin{align*}
\eta(B)_{i,k} &=\sum_{i_1,i_2=1}^d \F{{\bf H}_{i,i_1} B_{i_1,i_2} {\bf H}_{i_2,k}}\\
&=\sum_{i_1,i_2=1}^d A_{i,i_1} A_{i_2,k} B_{i_1,i_2} \F{{\bf X}_{i,i_1}{\bf X}_{i_2,k}}.
\end{align*}
From the fact that $\{{\bf X}_{i,k} \mid 1\leq i,k\leq d\}$ is a free circular family with ${\bf X}_{i,j}=({\bf X}_{j,i})^*$, we conclude that
\begin{equation*}
\eta(B)_{i,k} = \delta_{i,k} \sum_{j=1}^d A_{i,j}^2 B_{j,j}.
\end{equation*}
Therefore $\eta(B)\in\Diag{d}{\C}$ and the claimed expression holds.
\end{proof}

The previous lemma, in notation of Theorem \ref{Thm:Speicher}, shows that $T_{z{\rm I}}(D)\in\Diag{d}{\C}$ for all $D\in\Diag{d}{\C}$. This easily implies the following corollary.

\begin{corollary}
\label{Corollary:ImaginaryPartG}
Let ${\bf H}$ be a semicircular mixture, then $G_{\bf H}(z{\rm I})\in\Diag{d}{\C}$ and thus $G_{\bf H}(z{\rm I})_{i,i}\in\C^-$ for all $1\leq i\leq d$.
\end{corollary}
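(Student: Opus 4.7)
The plan is to work pointwise in $\omega\in\Omega$ and combine Lemma \ref{Lemma:eta} with Theorem \ref{Thm:Speicher}. Fix $\omega\in\Omega$ and $z\in\C^+$. By Theorem \ref{Thm:Speicher}, $G_{{\bf H}(\omega)}(z{\rm I}) = \lim_{n\to\infty} T_{z{\rm I}}^{\circ n}(W)$ for any $W\in\Mat{d}{\C}^-$, and the limit is independent of the choice of $W$. I would choose $W=-i{\rm I}$, which is simultaneously diagonal and in $\Mat{d}{\C}^-$ (since $\Im(-i{\rm I})=-{\rm I}$ is negative definite, so $-\Im(-i{\rm I})>0$).

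The central observation is that $T_{z{\rm I}}$ preserves $\Diag{d}{\C}$. Indeed, by Lemma \ref{Lemma:eta}, $\eta(D)\in\Diag{d}{\C}$ for every $D\in\Mat{d}{\C}$, so $z{\rm I}-\eta(D)$ is diagonal whenever $D$ is diagonal, and diagonal matrices are closed under inversion wherever the inverse exists. A straightforward induction then gives $T_{z{\rm I}}^{\circ n}(-i{\rm I})\in\Diag{d}{\C}$ for every $n\geq 0$.

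Since $\Diag{d}{\C}$ is a closed subspace of $\Mat{d}{\C}$, passing to the limit yields $G_{{\bf H}(\omega)}(z{\rm I})\in\Diag{d}{\C}$. Combining this with the fact, also furnished by Theorem \ref{Thm:Speicher}, that $G_{{\bf H}(\omega)}(z{\rm I})\in\Mat{d}{\C}^-$, and noting that for a diagonal matrix the condition $\Im(D)<0$ reduces to $\Im(D_{i,i})<0$ for each $i$, we conclude $G_{{\bf H}(\omega)}(z{\rm I})_{i,i}\in\C^-$ for every $1\leq i\leq d$. This argument holds for every $\omega$, giving the conclusion at the level of the random matrix $G_{\bf H}(z{\rm I})$.

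I do not foresee any real obstacle, since the entire content is packaged in Lemma \ref{Lemma:eta} and Theorem \ref{Thm:Speicher}. The only minor point worth being explicit about is that choosing the initial iterate $W$ inside $\Diag{d}{\C}\cap\Mat{d}{\C}^-$ is legitimate precisely because Speicher's result guarantees the limit does not depend on $W$; this is what lets the diagonal structure pass from the iterates to the limit.
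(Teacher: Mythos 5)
Your proof is correct and follows exactly the route the paper intends: Lemma \ref{Lemma:eta} shows $T_{z{\rm I}}$ preserves $\Diag{d}{\C}$, so iterating from a diagonal seed $W\in\Mat{d}{\C}^-$ and passing to the limit via Theorem \ref{Thm:Speicher} gives a diagonal $G_{\bf H}(z{\rm I})$, whose entries lie in $\C^-$ because $G_{\bf H}(z{\rm I})\in\Mat{d}{\C}^-$. The paper leaves this argument implicit ("this easily implies the following corollary"), and your write-up simply makes the same steps explicit.
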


It is important to notice that the following corollaries are weaker than the analysis done in \cite{Speicher2007}, however they are enough to prove Theorem \ref{Thm:Elim} so we include them for completeness.

\begin{lemma}
Let $z\in\C^+$, then $||G_{\bf H}(z{\rm I}))||_1\leq d\Im(z)^{-1}$.
\end{lemma}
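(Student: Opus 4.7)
The plan is to work pointwise in $\omega\in\Omega$ and exploit the diagonal structure we already have. By Corollary \ref{Corollary:ImaginaryPartG}, $G_{\bf H}(z{\rm I})$ is a diagonal matrix, so regardless of which of the standard matrix norms is meant by $\norm{1}{\cdot}$ (trace norm, entrywise $\ell^1$, or operator $\ell^1\to\ell^1$), for a diagonal matrix all of these collapse to $\sum_{i=1}^d |G_{\bf H}(z{\rm I})_{i,i}|$. So it suffices to bound each diagonal entry by $\Im(z)^{-1}$ and then sum over $i$.

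First I would invoke the fixed point equation (\ref{Eq:FixedPointEquation}) from Theorem \ref{Thm:Speicher}, namely $(z{\rm I}-\eta(G_{\bf H}(z{\rm I})))G_{\bf H}(z{\rm I})={\rm I}$, together with Lemma \ref{Lemma:eta}, which identifies $\eta(D)$ as the diagonal matrix with entries $\sum_{j=1}^d A_{i,j}^2 D_{j,j}$. Since $G_{\bf H}(z{\rm I})$ is itself diagonal, this gives the explicit scalar identity
\begin{equation*}
G_{\bf H}(z{\rm I})_{i,i} = \left(z - \sum_{j=1}^d A_{i,j}^2\, G_{\bf H}(z{\rm I})_{j,j}\right)^{-1}
\end{equation*}
for each $i\in\{1,\ldots,d\}$.

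The key observation is then the sign of the imaginary part. By Corollary \ref{Corollary:ImaginaryPartG}, $\Im(G_{\bf H}(z{\rm I})_{j,j})<0$ for every $j$, and by assumption the entries $A_{i,j}$ are real (indeed non-negative), so $A_{i,j}^2\geq 0$ and consequently $\Im\!\left(\sum_{j=1}^d A_{i,j}^2\, G_{\bf H}(z{\rm I})_{j,j}\right)\leq 0$. Thus
\begin{equation*}
\Im\!\left(z - \sum_{j=1}^d A_{i,j}^2\, G_{\bf H}(z{\rm I})_{j,j}\right) \geq \Im(z),
\end{equation*}
and since the modulus of a complex number is at least its imaginary part, $|G_{\bf H}(z{\rm I})_{i,i}|\leq \Im(z)^{-1}$.

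Summing this entrywise bound over the $d$ diagonal positions yields $\norm{1}{G_{\bf H}(z{\rm I})}\leq d\,\Im(z)^{-1}$, which holds pointwise in $\omega$ (and hence almost surely). I do not anticipate any serious obstacle here: the heavy lifting has already been done by Theorem \ref{Thm:Speicher} and Lemma \ref{Lemma:eta}; the one thing to be careful about is just verifying that the sign argument on the imaginary part works entry by entry, which is immediate from the non-negativity of the $A_{i,j}^2$.
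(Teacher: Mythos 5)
Your proof is correct and follows essentially the same route as the paper: the fixed point equation from Theorem \ref{Thm:Speicher} combined with Lemma \ref{Lemma:eta} gives the scalar identity for each diagonal entry, the sign of $\Im(G_{\bf H}(z{\rm I})_{j,j})$ from Corollary \ref{Corollary:ImaginaryPartG} yields $|G_{\bf H}(z{\rm I})_{i,i}|\leq\Im(z)^{-1}$, and summing over $i$ finishes. No gaps.
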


\begin{proof}
Let $z\in\C^+$ be fixed and $i\in\{1,\ldots,d\}$. By Lemma \ref{Lemma:eta} and the fixed point equation (\ref{Eq:FixedPointEquation}) we have
\begin{align*}
1 &= \left(z-\eta(G_{\bf H}(z{\rm I}))_{i,i}\right) G_{\bf H}(z{\rm I})_{i,i}\\
&= \left(z-\sum_{j=1}^d A_{i,j}^2 G_{\bf H}(z{\rm I})_{j,j}\right)G_{\bf H}(z{\rm I})_{i,i}.
\end{align*}
By Corollary \ref{Corollary:ImaginaryPartG} we have that $\Im G_{\bf H}(z{\rm I})_{j,j} \leq 0$ for all $1\leq j\leq d$, therefore
\begin{align*}
|G_{\bf H}(z{\rm I})_{i,i}| &= \left|z-\sum_{j=1}^d A_{i,j}^2 G_{\bf H}(z{\rm I})_{j,j}\right|^{-1}\\
&\leq \Im\left(z-\sum_{j=1}^d A_{i,j}^2 G_{\bf H}(z{\rm I})_{j,j}\right)^{-1}\\
&\leq \Im(z)^{-1},
\end{align*}
from which the result follows.
\end{proof}

The proof of the following lemma follows the same lines as the previous one.

\begin{lemma}
Let $D\in\Mat{d}{\C}^-$, then $||T_{z{\rm I}}(D)||_1\leq d\Im(z)^{-1}$.
\end{lemma}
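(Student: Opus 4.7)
The plan is to imitate the proof of the preceding lemma, exploiting the fact already established in Lemma \ref{Lemma:eta} that $\eta$ always takes values in $\Diag{d}{\C}$ and depends on its argument only through the diagonal entries. In particular, $z{\rm I}-\eta(D)$ will be diagonal, so $T_{z{\rm I}}(D)=(z{\rm I}-\eta(D))^{-1}$ is also diagonal, and then the $\|\cdot\|_1$-norm reduces to a sum of $d$ diagonal terms that can be controlled individually by bounding imaginary parts.

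More concretely, I would first apply Lemma \ref{Lemma:eta} (which only uses that $\eta(D)_{i,i}$ is a linear combination of the diagonal entries of $D$ with coefficients $A_{i,j}^2$) to write
\begin{equation*}
\eta(D)_{i,i} = \sum_{j=1}^d A_{i,j}^2 D_{j,j}, \qquad 1\le i\le d.
\end{equation*}
Next I would use that $D\in\Mat{d}{\C}^-$ means $-\Im(D)$ is positive definite, so in particular $\Im(D_{j,j}) = \langle e_j,\Im(D)e_j\rangle<0$ for every $j$. Since the entries of $A$ are non-negative and $A_{i,j}^2\ge 0$, this gives $\Im(\eta(D)_{i,i})\le 0$, and hence
\begin{equation*}
\Im\bigl(z-\eta(D)_{i,i}\bigr) \ge \Im(z) > 0.
\end{equation*}
In particular $z-\eta(D)_{i,i}\neq 0$, so $z{\rm I}-\eta(D)$ is invertible and its inverse is the diagonal matrix with entries $(z-\eta(D)_{i,i})^{-1}$.

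Then I would bound each diagonal entry by
\begin{equation*}
|T_{z{\rm I}}(D)_{i,i}| = \bigl|z-\eta(D)_{i,i}\bigr|^{-1} \le \Im\bigl(z-\eta(D)_{i,i}\bigr)^{-1} \le \Im(z)^{-1},
\end{equation*}
using the elementary inequality $|w|\ge \Im(w)$ for any $w\in\C$ with $\Im(w)>0$. Summing over $i$ and using that the off-diagonal entries of $T_{z{\rm I}}(D)$ vanish, I obtain $\|T_{z{\rm I}}(D)\|_1 \le d\,\Im(z)^{-1}$, which is the desired bound.

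There is no real obstacle: the only substantive input is Lemma \ref{Lemma:eta} (to make $\eta(D)$ diagonal and linear in the diagonal of $D$), after which the argument is a verbatim adaptation of the preceding lemma. The conclusion is actually stronger than one might naively expect, because the bound depends only on the diagonal imaginary parts of $D$, not on $D$ itself; this is a direct consequence of the ``diagonal-reducing'' behavior of $\eta$ established earlier.
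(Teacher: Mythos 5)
Your proof is correct and is essentially the argument the paper intends: the paper omits the proof, stating only that it ``follows the same lines as the previous one,'' and your adaptation (diagonality of $\eta(D)$ from Lemma \ref{Lemma:eta}, $\Im(\eta(D)_{i,i})\le 0$ from $D\in\Mat{d}{\C}^-$ and $A_{i,j}^2\ge 0$, then $|z-\eta(D)_{i,i}|^{-1}\le\Im(z)^{-1}$ entrywise) is exactly that adaptation.
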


Now we are ready to proof Theorem \ref{Thm:Elim}.

\begin{proof}[{\bf Proof of Theorem \ref{Thm:Elim}}]
By Theorem \ref{Thm:Speicher}, for each $\omega\in\Omega$ we have that ${\bf H}$ satisfies $G_{\bf H}(z{\rm I}) = \lim_{n\to\infty} T_{z{\rm I}}^{\circ n}({\rm i}{\rm I})$. By the definition of $\calG$ we have that $\calG_{\bf H}(z{\rm I}) = \E{G_{\bf H}(z{\rm I})} = \E{\lim_{n\to\infty} T_{z{\rm I}}^{\circ n}({\rm i}{\rm I})}$, so by the previous lemmas and the dominated convergence theorem,
\begin{equation*}
\calG_{\bf H}(z{\rm I}) = \lim_{n\to\infty} \E{T_{z{\rm I}}^{\circ n}({\rm i}{\rm I})},
\end{equation*}
as required.
\end{proof}

\begin{proof}[{\bf Proof of Corollary \ref{Corollary:InvariantDistribution}}]
Let $\omega\in\Omega$ fixed. Observe that, for $z\in\C^+$, $\eta(z{\rm I}) = Kz{\rm I}$. Thus the fixed point equation (\ref{Eq:FixedPointEquation}) implies that $KG_{\bf H}(z{\rm I})^2 - zG_{\bf H}(z{\rm I})+{\rm I}=0$. Equivalently, we have that $G_{\bf H}(z{\rm I})=\dfrac{z-\sqrt{z^2-4K}}{2K}{\rm I}$ and therefore $\calG_{\bf H}(z{\rm I}) = \E{G_{\bf H}(z{\rm I})} = \E{\dfrac{z-\sqrt{z^2-4K}}{2K}}{\rm I}$ as claimed.
\end{proof}

Finally, we prove the central limit theorem, but first we have to prove the following lemma.

\begin{lemma}
\label{Lemma:PairCancellation}
Let ${\bf Z}\in L^{\infty-}(\Omega,\calF,\mathbb{P})\otimes\Mat{d}{\calA}$ be a selfadjoint centered random operator-valued matrix. If ${\bf Z}^{(1)}$ and ${\bf Z}^{(2)}$ are independent and free over $\Mat{d}{\C}$ copies of ${\bf Z}$, then
\begin{equation*}
\E{\Eop{{\bf Z}^{(1)}{\bf Z}^{(2)}B{\bf Z}^{(2)}{\bf Z}^{(1)}}} = \hat{\eta}(\hat{\eta}(B))
\end{equation*}
where
\begin{equation*}
\hat{\eta}(B) = \E{\Eop{{\bf Z}B{\bf Z}}}.
\end{equation*}
\end{lemma}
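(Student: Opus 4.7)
My plan is to expand both sides of the identity along a fixed centered decomposition of ${\bf Z}$, factor the scalar part using independence, and collapse the operator part using freeness over $\Mat{d}{\C}$; the two reductions will combine into precisely $\hat{\eta}(\hat{\eta}(B))$.

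To set up, I will fix a representation ${\bf Z} = \sum_{i=1}^n a_i \otimes {\bf Z}_i$ with $\Eop{{\bf Z}_i} = {\bf 0}$ for every $i$, which exists by Definition \ref{Def:Centered}, and write the copies as ${\bf Z}^{(r)} = \sum_{i=1}^n a_i^{(r)} \otimes {\bf Z}_i^{(r)}$ ($r = 1,2$) as in Definition \ref{Def:IndependenceFreeness}. Expanding the quadruple product by multilinearity, applying $\EOP$ followed by $\E{\,\cdot\,}$, and factoring the resulting classical expectation thanks to the independence of $\{a_i^{(1)}\}$ and $\{a_i^{(2)}\}$ together with the fact that both families are distributed as $\{a_i\}$, will yield
\begin{equation*}
\E{\Eop{{\bf Z}^{(1)}{\bf Z}^{(2)}B{\bf Z}^{(2)}{\bf Z}^{(1)}}} = \sum_{i,j,k,l=1}^n \E{a_i a_l}\,\E{a_j a_k}\, \Eop{{\bf Z}_i^{(1)}{\bf Z}_j^{(2)} B {\bf Z}_k^{(2)}{\bf Z}_l^{(1)}}.
\end{equation*}

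Next, the inner operator moment is where I will use freeness. Setting $C_{j,k} := \Eop{{\bf Z}_j^{(2)} B {\bf Z}_k^{(2)}} \in \Mat{d}{\C}$, the matrix-valued element ${\bf Z}_j^{(2)} B {\bf Z}_k^{(2)} - C_{j,k}$ is $\EOP$-centered, and each ${\bf Z}_i^{(1)}$ is $\EOP$-centered by construction. Since the families $\{{\bf Z}_i^{(1)}\}$ and $\{{\bf Z}_j^{(2)}\}$ are free over $\Mat{d}{\C}$, the alternating centered word ${\bf Z}_i^{(1)}\bigl({\bf Z}_j^{(2)} B {\bf Z}_k^{(2)} - C_{j,k}\bigr){\bf Z}_l^{(1)}$ has vanishing $\EOP$, giving the nested identity $\Eop{{\bf Z}_i^{(1)}{\bf Z}_j^{(2)} B {\bf Z}_k^{(2)}{\bf Z}_l^{(1)}} = \Eop{{\bf Z}_i^{(1)} C_{j,k} {\bf Z}_l^{(1)}} = \Eop{{\bf Z}_i \Eop{{\bf Z}_j B {\bf Z}_k} {\bf Z}_l}$, where the last equality uses that the copies preserve the joint $\EOP$-distribution of $\{{\bf Z}_i\}$. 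Substituting back and using that $\hat{\eta}(B) = \sum_{j,k}\E{a_j a_k}\,\Eop{{\bf Z}_j B {\bf Z}_k}$ follows from the same kind of expansion applied to $\E{\Eop{{\bf Z} B {\bf Z}}}$, the linearity of $\hat{\eta}$ will then identify the outer sum as $\hat{\eta}(\hat{\eta}(B))$.

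The main obstacle I anticipate is keeping straight the meaning of centeredness in the freeness step: Definition \ref{Def:IndependenceFreeness} requires each alternating block to be $\EOP$-centered as an element of $\Mat{d}{\calA}$, so $\EOP$ of it must be the zero matrix, and one must verify that the centeredness hypothesis on ${\bf Z}$ (Definition \ref{Def:Centered}) both passes to the free copies and is restored in the middle slot by subtracting the matrix $C_{j,k}$ rather than a scalar constant; once this is unpacked, the remainder of the argument is linear-algebraic bookkeeping.
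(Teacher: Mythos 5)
Your proof is correct, but it takes a genuinely different route from the paper's. The paper argues at the level of the whole elements: it first invokes the freeness consequence $\Eop{{\bf Z}^{(1)}{\bf Z}^{(2)}B{\bf Z}^{(2)}{\bf Z}^{(1)}}=\Eop{{\bf Z}^{(1)}\Eop{{\bf Z}^{(2)}B{\bf Z}^{(2)}}{\bf Z}^{(1)}}$ (the general collapse recorded after Definition \ref{Def:IndependenceFreeness}), and then disposes of the classical randomness by conditioning on ${\bf Z}^{(1)}$, using the tower property, the commutation of conditional expectation with $\EOP$, and independence to replace the random matrix $\Eop{{\bf Z}^{(2)}B{\bf Z}^{(2)}}$ by its mean $\hat{\eta}(B)$. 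You instead expand along the tensor decomposition ${\bf Z}^{(r)}=\sum_i a_i^{(r)}\otimes{\bf Z}_i^{(r)}$ and perform both reductions coordinatewise: factoring $\E{a_i^{(1)}a_l^{(1)}a_j^{(2)}a_k^{(2)}}=\E{a_ia_l}\E{a_ja_k}$ by independence, and collapsing the operator word via an explicit alternating centered-word argument. Your version is more elementary and self-contained --- in particular it sidesteps the commutation of conditional expectation with $\EOP$, which the paper justifies only informally in the introduction --- at the cost of heavier bookkeeping; the paper's version is coordinate-free and shorter. One small remark on your anticipated obstacle: the collapse $\Eop{abc}=\Eop{a\Eop{b}c}$ for elements of two subalgebras free over $\Mat{d}{\C}$ actually holds without centering the outer factors (decompose $a=(a-\Eop{a})+\Eop{a}$ and likewise for $c$, and note that every resulting alternating word either is fully centered or reduces to a shorter one), so the centeredness of the ${\bf Z}_i^{(1)}$ is a convenience that shortens your freeness step rather than a logical necessity --- consistent with the fact that the paper's proof of this lemma never invokes centeredness at all; it is only needed later, in the proof of Theorem \ref{Thm:CentralLimitTheorem}, to kill singletons.
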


\begin{proof}
Since ${\bf Z}^{(1)}$ and ${\bf Z}^{(2)}$ are free over $\Mat{d}{\C}$,
\begin{align*}
\E{\Eop{{\bf Z}^{(1)}{\bf Z}^{(2)}B{\bf Z}^{(2)}{\bf Z}^{(1)}}} = \E{\Eop{{\bf Z}^{(1)}\Eop{{\bf Z}^{(2)}B{\bf Z}^{(2)}}{\bf Z}^{(1)}}}.
\end{align*}
By the tower property of the conditional expectation
\begin{align*}
\E{\Eop{{\bf Z}^{(1)}{\bf Z}^{(2)}B{\bf Z}^{(2)}{\bf Z}^{(1)}}} &= \E{\E{\Eop{{\bf Z}^{(1)}\Eop{{\bf Z}^{(2)}B{\bf Z}^{(2)}}{\bf Z}^{(1)}}\bigg| {\bf Z}^{(1)}}}\\
&= \E{\Eop{{\bf Z}^{(1)}\E{\Eop{{\bf Z}^{(2)}B{\bf Z}^{(2)}}\bigg| {\bf Z}^{(1)}}{\bf Z}^{(1)}}},
\end{align*}
where the last equation uses that conditional expectation and $\EOP$ commute (see Section \ref{Section:Introduction}). By independence,
\begin{align*}
\E{\Eop{{\bf Z}^{(1)}{\bf Z}^{(2)}B{\bf Z}^{(2)}{\bf Z}^{(1)}}} &= \E{\Eop{{\bf Z}^{(1)}\E{\Eop{{\bf Z}^{(2)}B{\bf Z}^{(2)}}}{\bf Z}^{(1)}}}\\
&= \hat{\eta}(\hat{\eta}(B)),
\end{align*}
as required.
\end{proof}

This random operator-valued version of the pair cancellation lemma allows us to prove the central limit theorem for random operator-valued matrices {\it mutatis mutandis} as in the operator-valued case.

\begin{proof}[{\bf Proof of Theorem \ref{Thm:CentralLimitTheorem}}]
Let $m\in\N$ be fixed, then
\begin{align*}
\E{\Eop{{\bf S}_N^m}} &= N^{-m/2} \sum_{i_1,\ldots,i_m=1}^N \E{\Eop{{\bf Z}^{(i_1)} \cdots {\bf Z}^{(i_m)}}}.
\end{align*}
As in the scalar and operator-valued cases, see \cite{Nica2006} and \cite{Speicher1998} respectively, the independence, freeness and identically distributed assumptions imply that the value of $\E{\Eop{{\bf Z}^{(i_1)} \cdots {\bf Z}^{(i_m)}}}$ depends on $i_1,\ldots,i_m$ by means of which indices are equal and which are different. Let $\mathcal{P}(m)$ denote the set of all partitions of the integers $\{1,\ldots,m\}$. For $\pi\in\mathcal{P}(n)$, we denote by $B_\pi^N$ the number of tuples $(i_1,\ldots,i_m)$ of indices such that $1\leq i_1,\ldots,i_m\leq N$ and $i_j=i_k$ if and only if $j$ and $k$ belong to the same block in $\pi$. Also, we denote by $C_\pi$ the value of $\E{\Eop{{\bf Z}^{(i_1)} \cdots {\bf Z}^{(i_m)}}}$ where $(i_1,\ldots,i_m)$ satisfies that $i_j=i_k$ if and only if $j$ and $k$ belong to the same block in $\pi$. We have then
\begin{equation*}
\E{\Eop{{\bf S}_N^m}} = N^{-m/2} \sum_{\pi\in\mathcal{P}(m)} B_\pi^N C_\pi.
\end{equation*}
As in the aforementioned references, we can analyze $\mathcal{P}(m)$ in four groups: partitions with singletons, non-crossing pairings, crossing pairings and {\it the rest}. Applying $\EOP$, the freeness and centeredness assumptions imply that partitions with singletons and crossing pairings vanish. A simple combinatorial analysis shows that {\it the rest} does not contribute asymptotically. Therefore, just the non-crossing partitions contribute asymptotically, with $N^{-m/2}B_\pi^N\to1$ as $N\to\infty$, and the previous pair cancellation lemma then leads to
\begin{align*}
\E{\Eop{{\bf S}_N^m}} &= \sum_{\pi\in\NC_2(m)}  \hat{\kappa}_\pi({\bf Z})+o(1).
\end{align*}
This establishes the desired convergence.
\end{proof}

It is important to notice that the independence were used only to apply the pair cancellation lemma (Lemma \ref{Lemma:PairCancellation}). Since the properties of $\EOP$ were used most of the time, it is natural then to expect that the limiting $\Mat{d}{\C}$-moments of the CLT limit have the same structure as in the operator-valued case.

\begin{proof}[{\bf Proof of Corollary \ref{Corollary:CLT}}]
The hypothesis of the CLT are clearly satisfied and an easy computation shows that $\displaystyle \hat{\eta}_{A\circ{\bf X}}(B)_{i,j} = \sum_{i_1,i_2=1}^d B_{i_1,i_2} \E{A_{i,i_1}A_{i_2,j}} \F{X_{i,i_1}X_{i_2,j}}$, which establishes equation (\ref{Eq:Covariance}).
\end{proof}

\end{document}